\numberwithin{equation}{section}
\newtheorem{theorem}{Theorem}[section]
\newtheorem{lemma}[theorem]{Lemma}
\theoremstyle{definition}
\newtheorem{defn}[equation]{Definition}
\newtheorem{remark}[equation]{Remark}
\def \bb{\mathbb}
\def \mb{\mathbf}
\def \mc{\mathcal}
\def \mf{\mathfrak}
\def \ZZ{{\bb{Z}}}
\def \({\left(}
\def \){\right)}
\def \<{\langle}
\def \>{\rangle}
\def \tensor{\otimes}
\begin{document}

\title[Tannakian classification of toric principal bundles]{Tannakian classification of 
equivariant principal bundles on toric varieties}

\author[I. Biswas]{Indranil Biswas}
\address{School of Mathematics, Tata Institute of Fundamental Research, Mumbai, India }

\email{indranil@math.tifr.res.in}

\author[A. Dey]{Arijit Dey}

\address{Department of Mathematics, Indian Institute of Technology-Madras, Chennai, India }

\email{arijitdey@gmail.com}

\author[M. Poddar]{Mainak Poddar}

\address{Mathematics Group, Middle East Technical University, Northern Cyprus Campus, Guzelyurt, Mersin 10, Turkey}

\curraddr{Department of Mathematics, Indian Institute of Science Education and Research (IISER) Pune, India }

\email{mainakp@gmail.com}

\subjclass[2010]{Primary: 14M25, 32L05. Secondary: 14L30}

\keywords{Equivariant bundles, principal bundles, toric varieties, Tannakian category.}

\begin{abstract}
Let $X$ be a complete toric variety equipped with the action of a torus $T$
and $G$ a reductive algebraic group, defined over an algebraically closed field $K$. 
We introduce the notion of a compatible $\Sigma$--filtered algebra associated to $X$, 
generalizing the notion of a compatible $\Sigma$--filtered vector space due to Klyachko, 
where $\Sigma$ denotes the fan of $X$.
We combine Klyachko's classification of $T$--equivariant vector bundles on $X$ with Nori's
Tannakian approach to principal $G$--bundles, to give an equivalence of categories between
 $T$--equivariant principal $G$--bundles on $X$ and certain compatible $\Sigma$--filtered algebras 
associated to $X$, when the characteristic of $K$ is $0$. 
 
\end{abstract}

\maketitle

\tableofcontents

\section{Introduction}

Let $X$ be a toric variety, with fan $\Sigma$, under the action of a torus $T$, and let $G$ be a reductive 
algebraic group; all are defined over an algebraically closed field $K$. A
$T$--equivariant vector bundle $E$ on $X$ is a vector bundle on $X$ endowed with a lift of the $T$--action which is linear on fibers. 
The $T$--equivariant vector bundles over a nonsingular toric variety were 
first classified by Kaneyama \cite{Kan1}. This classification result for toric vector 
bundles is up to isomorphism and it involves both combinatorial and linear algebraic 
data modulo an equivalence relation. Recently this work has been generalized for 
$T$--equivariant principal $G$--bundles \cite{BDP1, BDP2}; also see 
\cite{BDP3, DP}, when $K$ is the field $\mathbb{C}$ of complex numbers.

In a foundational paper Klyachko gave an alternative description of equivariant vector bundles on arbitrary 
toric varieties (possibly non-smooth) defined over any algebraically closed field \cite{Kly}. His 
correspondence gives an equivalence between the category $\mf{Vec}^T (X)$ of equivariant 
vector bundles on $X$ and the category $\mf{Cvec}(\Sigma)$ of finite dimensional vector 
spaces with collection of decreasing $\mathbb{Z}$--graded filtrations, indexed by the
rays of $\Sigma$, satisfying a certain compatibility condition. Klyachko 
used his classification theorem to compute the Chern characters and sheaf cohomology of 
equivariant vector bundles. As a major application, later he used his classification 
of equivariant vector bundles over $\mathbb P^2$ to prove Horn's conjecture 
on eigenvalues of sums of Hermitian matrices \cite{Kly2}. Another interesting and more recent application 
is a theorem of Payne \cite{Pay} that the moduli space of rank 3 toric vector bundles satisfy Murphy's law. Klyachko's classification theorem has also been generalized for equivariant torsion-free and equivariant pure sheaves by Perling \cite{Per} and Kool \cite{Ko} respectively.

In this paper, our aim is to prove an analogue of Klyachko's result for $T$--equivariant principal 
$G$--bundles over a toric variety $X$. For some technical reasons (see Lemma \ref{cext2}), we need the group 
$G$ to be linearly reductive. Hence, for the main result (Theorem \ref{classi}), we find it natural to 
restrict ourselves to the case where the characteristic of $K$ is zero. However, the result may be generalized to some fields of positive characteristic; see \cite[Chapter 6, Theorem 2]{Nag}.
 
The first step in our formulation is 
an equivariant Nori theorem (Theorem \ref{equiv1}), where we identify $T$--equivariant principal $G$--bundles
with functors from the category of finite dimensional $G$--modules to the category
of $T$--equivariant vector bundles over $X$, satisfying Nori's four
conditions; see Section \ref{NC}. In fact this theorem holds not only for $T$, but for any affine algebraic group $\Gamma$ acting on an algebraic variety $X$.

Next we introduce the notion of a compatible $\Sigma$--filtered $K$--algebra
which is a $K$--algebra endowed with a 
collection of decreasing $\mathbb{Z}$--graded filtrations indexed by the rays of $\Sigma$, 
that satisfy certain additive and multiplicative compatibility conditions; see Definitions \ref{sfa} and \ref{csfvs}.
Let $\mf{Calg}_G(\Sigma)$ be the category of such filtered $K$--algebras,
$G$--equivariantly isomorphic to $K[G]$ (under the standard action),
that satisfy the following: For every top dimensional cone $\sigma\,\in\,\Sigma$, the $K$--algebra 
admits an action of $T$ which is compatible with the filtrations and commutes with 
the $G$--action. 

Then we invoke a crucial fact, that a $T$--equivariant principal $G$--bundle over any affine toric variety is equivariantly trivializable. In the complex case, a proof of this is given in \cite[Theorem 2.1]{BDP2} using an equivariant Oka-Grauert principle \cite{HK}. See \cite{KLS1, KLS2, KLS3, KLS4} for  stronger versions of equivariant Oka-Grauert principle and related results. 
The proof  presented here  (Theorem \ref{etriv}), which works for arbitrary characteristic, is due to one of the referees. It is based on a characteristic-free version of Luna's \'etale slice theorem
 proved in \cite{BR}.

Now, assume that every maximal cone in the fan $\Sigma$ of $X$ is of top dimension.
This holds, in particular, when $X$ is complete.
Using the two results mentioned above, we prove an 
equivalence between the category $\mf{Pbun}^T_G(X)$ of $T$--equivariant principal $G$--bundles
over $X$, and the category $\mf{Calg}_G(\Sigma)$ (Theorem \ref{classi}).
The most intriguing step in our proof is the commutativity of the $T$ and $G$ actions on the 
$K$--algebras in the definition of $\mf{Calg}_G(\Sigma)$; see Lemma \ref{commute}.
As a corollary to Theorem \ref{classi}, we obtain a necessary and sufficient condition for
an equivariant reduction of structure group (Theorem \ref{reduc}).
When $G\,=\,{\rm GL}(n, K)$, Klyachko's filtration data for equivariant vector 
bundle may be recovered from our filtered algebra description (see the proof of Lemma \ref{surj}). 

 In a recent work \cite{IS}, Ilten and S$\ddot{\text{u}}$ss have obtained a Klyachko--type 
classification of torus equivariant vector bundles over $T$--varieties, and related 
it to Hartshorne's conjecture on splitting of rank two bundles over projective 
spaces. It seems natural that our classification 
of equivariant principal $G$-bundles should generalize for $T$-varieties. 

Recently Kaveh and Manon shared with us another interesting
approach to toric principal $G$--bundles \cite{KM}.

\section{Nori's correspondence}\label{NC}

Let $X$ be a separated, integral, finite type scheme over an arbitrary field $K$. We denote the ring of $K$-valued regular functions on $X$ by $K[X]$. 

The category of finite dimensional vector spaces over $K$ will be denoted by $\mf{Vec}$.
Let $G$ be an affine algebraic group over $K$. By an algebraic group we mean a smooth finitely
generated group scheme over $K$; see \cite[Definition 0.2, pp. 2]{Mum}.

The category of algebraic left representations of $G$ that are finite dimensional $K$--vector
spaces will be denoted by $G$--mod.

For convenience of readers, we recall the following equivalent notions of categorical equivalence which will be useful later (see \cite[Chapter IV, section 4]{Mc} for details).
\begin{enumerate}
\item{} Two categories $\mathcal C_1$ and $\mathcal C_2$ are equivalent, i.e. there exists functors $F_1\,: 
\mathcal C_1 \longrightarrow \mathcal C_2$ and $F_2\,: \mathcal C_2 \longrightarrow \mathcal C_1$ such that $F_2 \circ 
F_1$ and $F_1\circ F_2$ are naturally isomorphic to identity functors $\text{Id}_{\mathcal C_1}$ and 
$\text{Id}_{\mathcal C_2}$ respectively. $F_1$ and $F_2$ are called quasi-inverses of each other.

\item{} There exists a functor $F\,: \mathcal C_1 \longrightarrow \mathcal C_2$, which is full, faithful and essentially surjective. 
\end{enumerate}

Let $\mf{T}$ be a tensor category 
over $K$ (see \cite[Chapter II, section 1]{LNM900} for definition of tensor category). A functor $$\mb{H}\,:\, G\text{--mod}\,\longrightarrow\, \mf{T}$$ is said to satisfy
properties F1--F4 if the following hold (see \cite[Chapter 1]{Nor2} for more detailed description of
these properties):
\begin{enumerate}
 \item F1: $\mb{H}$ is a $K$--additive exact functor,
 \item F2: $\mb{H} \circ \otimes \,=\, \otimes \circ(\mb{H} \times
 \mb{H})$,
 \item F3: furthermore,
 \begin{enumerate} \item $\mb{H}$ respects associativity
 of tensor products,
 \item $\mb{H}$ respects commutativity
 of tensor products, 
 \item $\mb{H}$ takes the identity object of ($G$--mod,\, $\otimes$), namely the trivial
$G$--module $K$, to the identity object of ($\mf{T},\, \otimes$), and
\end{enumerate}
\item F4: the functor $\mb{H}$ is faithful.
 \end{enumerate}
 
Let $\mf{Vec}(X)$
be the category of vector bundles over $X$. 
Whenever convenient, we shall identify a vector bundle on $X$ with the
locally free coherent sheaf on $X$ given by its local sections.
Consider the category $\mathfrak{Nor}(X)$ of ``Nori functors'' whose
\begin{itemize}
\item objects are functors $\mb{E}\,:\, G\text{--mod} \,\longrightarrow\, \mf{Vec}(X)$
that satisfy F1-F4, and

\item morphisms are natural isomorphisms of functors.
\end{itemize}

Let $\mathfrak{Pbun}_G(X)$ denote the category of principal $G$--bundles
over $X$. Given a principal $G$--bundle $E_G$, 
the functor which sends a $G$-module $V$ to the associated vector bundle $E_G \times^G V$
satisfies properties F1-F4. We may therefore refer to it as the Nori functor associated to $E_G$.
Let
\begin{equation}\label{a2}
\mb{N}_0\,:\, \mathfrak{Pbun}_G(X) \,\longrightarrow \mathfrak{Nor}(X)
\end{equation}
be the functor that sends any principal $G$--bundle $E_G$ to its associated Nori functor.

Let $\mf{Qco}(X)$ be the category of quasi-coherent sheaves of $\mc{O}_X$--modules.
In \cite[Lemma 2.2]{Nor2}, Nori showed that any functor $\mb{E}\,\in\, \mathfrak{Nor}(X)$ admits
a unique and natural extension to a functor $\mb{\overline{E}}$ from affine
$G$--schemes to $\mf{Qco}(X)$. He showed that $\mb{\overline{E}}(G)$ is a
principal $G$--bundle over $X$. This defines a functor 
\begin{equation}\label{a3}
\mb{N}_1\,:\, \mathfrak{Nor}(X) \,\longrightarrow\, \mathfrak{Pbun}_G(X)
\end{equation}
that sends a Nori functor
$\mb{E}$ to the principal $G$--bundle $\mb{\overline{E}}(G)$.
He went on to show that $\mb{N}_0$ and $\mb{N}_1$ are quasi-inverses,
proving that the categories $\mathfrak{Nor}(X)$ and $\mathfrak{Pbun}_G(X)$ are equivalent.

In this section, we shall establish an equivariant analogue of the above equivalence.
Let $\Gamma$ be an affine algebraic group defined over $K$, and let
$$
\eta\,:\, \Gamma\times X\,\longrightarrow\, X
$$
be an algebraic left action of $\Gamma$ on $X$ (see \cite[Definition 0.3, pp. 2]{Mum} for definition of group 
scheme action on an arbitrary scheme). A $\Gamma$--{\it
equivariant} vector bundle on $X$ is a pair $(W\,
,\widetilde{\eta})$, where $W$ is an algebraic vector bundle on
$X$ and
$$
\widetilde{\eta}\,:\, \Gamma\times W\,\longrightarrow\, W
$$
is an algebraic left action of $\Gamma$ on the total space of $W$
such that
\begin{itemize}
\item $\widetilde{\eta}$ is a lift of $\eta$, and

\item $\widetilde{\eta}$ preserves the linear structure on $W$, in particular,
it is fiberwise linear.
\end{itemize}
We refer the reader to \cite[section 1.2]{Thom} for a detailed definition of an equivariant sheaf. 

Similarly, a $\Gamma$--{\it equivariant} principal $G$--bundle on $X$
is a pair $(E_G\, ,\widetilde{\eta})$, where $E_G$ is an algebraic
principal bundle on $X$ and
$$
\widetilde{\eta}\,:\, \Gamma\times E_G\,\longrightarrow\, E_G
$$
is an algebraic left action of $\Gamma$ on the total space of
$E_G$ such that
\begin{itemize}
\item $\widetilde{\eta}$ is a lift of $\eta$, and

\item $\widetilde{\eta}$ commutes with the right action of $G$ on $E_G$.
\end{itemize}

Let $\mf{Vec}^{\Gamma}(X)$ (respectively, $\mf{Pbun}_G^{\Gamma}(X)$) be the
category of $\Gamma$--equivariant vector bundles (respectively,
$\Gamma$--equivariant principal $G$--bundles) over $X$.
Let
\begin{equation}\label{not2}
\mf{Nor}^{\Gamma}(X)
\end{equation}
be the category whose
\begin{itemize}

\item objects are functors $\mb{E}\,:\, G\text{--mod} \,\longrightarrow\,
\mf{Vec}^{\Gamma}(X)$ satisfying F1--F4, and

\item morphisms are natural isomorphisms of functors.
\end{itemize}

Take any $\mb{E}\, \in\, \mf{Nor}^{\Gamma}(X)$.
For any $V \,\in\, G\text{--mod}$, let $E(V)$ denote the underlying vector bundle of $\mb{E}(V)$,
and let $\widetilde{\eta}(V)$ denote the action
of $\Gamma$ on $E(V)$. For any homomorphism
of $G$--modules $\phi\,:\, V \,\longrightarrow\, W$, the following diagram is commutative:
\begin{equation}\label{cdf}
\begin{CD}
\Gamma \times E(V) @> \widetilde{\eta}(V) >> E(V) \\
@V {\rm id} \times {\mb E}(\phi) VV @V {\mb E}(\phi) VV \\
 \Gamma \times E(W) @> \widetilde{\eta}(W) >> {E}(W)
\end{CD}
\end{equation}
Also, we have $E(V \otimes W) \,= \,E(V) \otimes E(W)$ and
$\widetilde{\eta}(V \otimes W) \,=\, \widetilde{\eta}(V) \otimes
\widetilde{\eta}(W)$.

We say that a (possibly infinite dimensional)
$G$--module $\overline{V}$ is \textit{locally finite} if
given any vector $v \,\in\, \overline{V}$, there exists a finite dimensional
$G$--submodule $V \,\subset\, \overline{V}$ with $v\,\in\, V$. Let
$G\text{--}\overline{\text{mod}}$ denote the category whose objects are
locally finite $G$--modules and morphisms are $G$--module
homomorphisms.

It is well-known that for any affine algebraic group $G$ and any affine $G$--scheme $X$, the
$G$--module $K[X]$ is locally finite (cf. \cite[Proposition 8.6]{Hum}, 
\cite[Theorem, section 3.3]{Wa}).

\begin{lemma}\label{cext}
Let ${\mf{Alg}}^{\Gamma}(X)$ denote the category
of $\Gamma$--equivariant sheaves of commutative associative $\mc{O}_X$--algebras, and let
$G-{\rm sch} $ be the category of affine $G$--schemes. Let $\mb{E} $ be an object of
$\mf{Nor}^{\Gamma}(X)$. Then there exists a unique extension of
$\mb{E}$ to a functor $$\mb{\overline{E}}\,:\, G {\rm -sch} \,\longrightarrow\,
{\mf{Alg}}^{\Gamma}(X) \,.$$
\end{lemma}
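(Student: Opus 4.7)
The plan is to imitate Nori's non-equivariant extension construction and verify that the $\Gamma$-equivariant structure is preserved at every step. Forgetting the $\Gamma$-action turns $\mb{E}$ into an ordinary object of the category described before equation \eqref{a2}, to which Nori's theorem \cite{Nor1, Nor2} applies and yields a unique extension $\mb{\overline{E}}$ as a functor from $G$-sch to sheaves of $\mc{O}_X$-algebras. My task is therefore to lift this extension along the forgetful functor $\mf{Alg}^{\Gamma}(X) \,\longrightarrow\, \mf{Alg}(X)$.

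On objects, given an affine $G$-scheme $Y$, the coordinate algebra $A \,=\, K[Y]$ is locally finite, so $A \,=\, \varinjlim V_\alpha$ is a filtered colimit of finite dimensional $G$-submodules. I would set
$$\mb{\overline{E}}(Y) \,:=\, \varinjlim_\alpha \mb{E}(V_\alpha),$$
regarded as a quasi-coherent sheaf on $X$. Each $\mb{E}(V_\alpha)$ is $\Gamma$-equivariant, and diagram \eqref{cdf} applied to the $G$-module inclusions $V_\alpha \,\hookrightarrow\, V_\beta$ forces the transition maps to be $\Gamma$-equivariant, so the colimit carries a canonical $\Gamma$-equivariant structure. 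The multiplication $\mu\,:\, A \otimes A \,\longrightarrow\, A$ and unit $e\,:\, K \,\longrightarrow\, A$ are morphisms in $G\text{-}\overline{\text{mod}}$; restricting to the finite dimensional level, applying $\mb{E}$, invoking F2 to identify $\mb{E}(V_\alpha \otimes V_\beta)$ with $\mb{E}(V_\alpha) \otimes \mb{E}(V_\beta)$, and passing to the colimit yields $\mc{O}_X$-bilinear structure maps on $\mb{\overline{E}}(Y)$. Axiom F3 encodes associativity, commutativity and the unit axiom, while the $\Gamma$-equivariance of $\mu$ and $e$ combined with \eqref{cdf} shows the induced operations on $\mb{\overline{E}}(Y)$ are $\Gamma$-equivariant. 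Hence $\mb{\overline{E}}(Y) \,\in\, \mf{Alg}^{\Gamma}(X)$.

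For a $G$-morphism $f\,:\, Y \,\longrightarrow\, Y'$, dualising gives a $G$-algebra map $K[Y'] \,\longrightarrow\, K[Y]$; restricting to a compatible cofinal system of finite dimensional $G$-modules, applying $\mb{E}$, and taking the colimit produces a $\Gamma$-equivariant $\mc{O}_X$-algebra homomorphism $\mb{\overline{E}}(Y') \,\longrightarrow\, \mb{\overline{E}}(Y)$. Functoriality descends from that of $\mb{E}$, and uniqueness is forced by the requirement that any extension must agree with $\mb{E}$ on finite dimensional $G$-modules, commute with the filtered colimits used to resolve $K[Y]$, and respect the algebra and equivariance structures. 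The main obstacle -- really the only non-routine point -- is to verify that the colimit construction and the algebra operations indeed land in $\mf{Alg}^{\Gamma}(X)$ rather than merely in $\mf{Alg}(X)$. This is resolved by applying \eqref{cdf} to the structural $G$-module maps (the inclusions $V_\alpha \hookrightarrow A$, the multiplication $\mu$, and the unit $e$), together with the fact that $\mb{E}$ already takes values in $\Gamma$-equivariant vector bundles.
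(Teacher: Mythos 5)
Your proposal is correct and follows essentially the same route as the paper: extend to locally finite $G$--modules by taking direct limits over finite dimensional $G$--submodules, use diagram \eqref{cdf} to transport the $\Gamma$--action to the colimit, and, following Nori, encode the algebra structure via the multiplication morphism $m\colon A\otimes A\to A$ in $G\text{--}\overline{\rm mod}$ together with F2--F3. The only cosmetic difference is that you phrase the argument as a lift along the forgetful functor and explicitly track the unit map, which the paper leaves implicit.
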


\begin{proof} Let $\mf{Qco}^{\Gamma}(X)$ be the category of all $\Gamma$--equivariant
quasicoherent sheaves of $\mc{O}_X$--modules. First, observe that there is a unique
 extension $\mb{\overline{E}} \,:\, G\text{--}\overline{\rm mod} \,
\longrightarrow\,\mf{Qco}^{\Gamma}(X)$ that satisfies properties F1-F4 (cf. \cite[Lemma(2.1)]{Nor2}).
For $\overline{V}\,\in\, G\text{--}\overline{\rm mod}$, denote the underlying sheaf 
of $\mb{\overline{E}}(\overline{V})$ by ${\overline{E}}(\overline{V})$. Note that ${\overline{E}}(\overline{V})$
is the direct limit of ${E}(V)$, where $V$ varies over all finite dimensional
$G$--submodules of $\overline{V}$.

Use \eqref{cdf} to
 take direct limit
of the morphisms $\widetilde\eta(V) \,:\, \Gamma \times E(V)\,\longrightarrow\, E(V)$
as $V$ varies over all finite dimensional $G$--submodules of $\overline{V}$.
In this way we obtain an action
\begin{equation}\label{a1}
\widetilde{\eta}(\overline{V})\,:\, \Gamma
\times \overline{E}(\overline{V}) \,\longrightarrow\, \overline{E}(\overline{V})\, .
\end{equation}

Suppose $\overline{\phi}\,: \,\overline{U} \,\longrightarrow\, \overline{V}$ is a morphism
of locally finite $G$--modules. To define $\mb{\overline{E}}(\overline{\phi})$,
consider any $u \,\in \,\overline{U}$. There exists a finite
dimensional $G$--module $U \,\subset\, \overline{U}$ such that $u \,\in\, U$. 
Let $V$ denote the image $\overline{\phi}(U)$ with $i_V \, :\, V \,\longrightarrow\,
\overline{V}$ being the inclusion map. Note that $V$ is a finite dimensional
$G$--module. Let $\psi\,:\,U \,\longrightarrow\,V$ be the unique homomorphism such that
$\overline{\phi}\vert_{U}\,=\, i_V \circ \psi$. Define
$$\mb{E}(\overline{\phi}) (u) \,=\, [\mb{E}(\psi)(u)] \, ,$$ to be the equivalence
class of $\mb{E}(\psi)(u) \,\in\, V$
in the direct limit $\overline{V}$. It is straightforward to check that this is indeed well-defined.
Since the operation of direct limit commutes with tensor product, the extension
preserves tensor product. 

Following Nori, consider a commutative $G$--algebra $A$
as a locally finite $G$--module together with a homomorphism $m \,:\, A \otimes A
\,\longrightarrow\, A$. Then $\mb{\overline{E}}(m)$ defines the structure of a $\Gamma$--equivariant
commutative, associative $\mc{O}_X$--algebra on $\overline{E}(A)$.

A similar argument shows that if $\phi\,:\, A \,\longrightarrow\, B$ is a homomorphism
of $G$--algebras, then $\mb{\overline{E}}(\phi)$ is a homomorphism of
$\Gamma$--equivariant sheaves of $\mc{O}_X$--algebras.
\end{proof}

It was shown by Nori, \cite[Lemma 2.3]{Nor2}, that $\mb{\overline{E}}(K[G])$ is a sheaf of $\mc{O}_X$--algebras that 
corresponds to a principal $G$--bundle over $X$. We denote by
${\overline{E}}(K[G])$ the principal $G$--bundle on $X$ corresponding
to $\mb{\overline{E}}(K[G])$.

The right $G$--action on ${\overline{E}}(K[G])$ is constructed as follows. Consider $G'$
to be a copy of $G$ with trivial $G$--action. Note that
${\overline{E}}(K[G'])$ is the trivial principal $G'$--bundle $X \times G'\,\longrightarrow\, X$ with trivial
$\Gamma$--action on fibers. Let
\begin{equation}\label{trivac1}
a\,:\, G \times G' \,\longrightarrow\, G
\end{equation}
be the multiplication map of $G$. This $a$ produces an action of $G'$
on $G$. Then $\mb{\overline{E}}(a)$ induces a morphism
\begin{equation}\label{trivac2}
{\overline{E}}(a) \,:\, {\overline{E}}(K[G]) \times_X{\overline{E}}(K[G'])\,=\,
{\overline{E}}(K[G]) \times G' \,\longrightarrow\, {\overline{E}}(K[G])\,.
\end{equation}

This induces the required fiber-wise action of $G'$ on
${\overline{E}}(K[G])$. Note that $ \mb{\overline{E}}(a)$ is a morphism of
$\Gamma$--equivariant sheaves. Therefore, the actions of
$\Gamma$ and $G'$ on $ {\overline{E}}(K[G])$ commute. Consequently, we have $ {\overline{E}}(K[G])\,\in\, \mf{Pbun}_G^{\Gamma}(X)$.

It follows that $\mb{N_1}$ in \eqref{a3} produces a functor
$$ \mb{N}_1^{\Gamma} \,:\, \mf{Nor}^{\Gamma}(X) \,\longrightarrow\, \mf{Pbun}_G^{\Gamma}(X)\, ,\,~~
\mb{E} \,\longmapsto\, (\overline{E}(K[G])\, ,
\widetilde{\eta}(K[G]))\, ,$$ where $\widetilde{\eta}$ is constructed in \eqref{a1}.
On the other hand,
the functor $\mb{N}_0$ in \eqref{a2} produces a functor
$\mb{N}_0^{\Gamma} \,:\, \mf{Pbun}_G^{\Gamma}(X) \,\longrightarrow\, \mf{Nor}^{\Gamma}( X)$.

An analogue of the following result when $\Gamma$ is a finite group has appeared before in \cite{BBN}.

\begin{theorem}\label{equiv1} The above two functors $\mb{N}_0^{\Gamma}$ and
$\mb{N}_1^{\Gamma}$ are mutually
quasi-inverses that induce an equivalence of categories between
$\mf{Pbun}_G^{\Gamma}(X)$ and $\mf{Nor}^{\Gamma}(X)$.
\end{theorem}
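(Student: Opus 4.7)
The plan is to reduce Theorem \ref{equiv1} to Nori's non-equivariant equivalence $\mf{Nor}(X) \simeq \mf{Pbun}_G(X)$, by verifying that the natural isomorphisms arising in his proof are $\Gamma$-equivariant in our setting. The essential groundwork is already in place: Lemma \ref{cext} provides the extension $\mb{\overline{E}}$ valued in the equivariant algebra category $\mf{Alg}^{\Gamma}(X)$, and the construction \eqref{trivac1}--\eqref{trivac2} produces a $G$-action on $\overline{E}(K[G])$ that commutes with the $\Gamma$-action, so $\mb{N}_1^{\Gamma}$ is already known to land in $\mf{Pbun}_G^{\Gamma}(X)$.

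First I would verify $\mb{N}_1^{\Gamma} \circ \mb{N}_0^{\Gamma} \,\cong\, \mathrm{id}$ on $\mf{Pbun}_G^{\Gamma}(X)$. Starting from $(E_G, \widetilde{\eta})$, the functor $\mb{N}_0^{\Gamma}(E_G, \widetilde{\eta})$ sends any $V$ to the associated bundle $E_G \times_G V$ with $\Gamma$-action induced from $\widetilde{\eta}$. Extending by Lemma \ref{cext} and evaluating at $K[G]$ produces a $\Gamma$-equivariant sheaf of $\mc{O}_X$-algebras canonically identified with $\pi_* \mc{O}_{E_G}$, where $\pi\,:\,E_G \to X$, via the pairing $[e, f] \,\mapsto\, (eg \mapsto f(g))$. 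Because the $\Gamma$-action and the right $G$-action on $E_G$ commute, this identification is simultaneously $\Gamma$-equivariant and $G$-equivariant, and taking relative $\mathrm{Spec}$ recovers $(E_G, \widetilde{\eta})$.

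Next I would establish $\mb{N}_0^{\Gamma} \circ \mb{N}_1^{\Gamma} \,\cong\, \mathrm{id}$ on $\mf{Nor}^{\Gamma}(X)$. Given $\mb{E}$, Nori's construction furnishes a natural isomorphism
\[
\alpha_V \,:\, \mb{E}(V) \,\longrightarrow\, \overline{E}(K[G]) \times_G V
\]
assembled from the standard $G$-module isomorphism $K[G] \otimes V \,\cong\, K[G] \otimes V_{\mathrm{triv}}$ (on the right $G$ acts only through $K[G]$), together with properties F1--F3. Since by hypothesis $\mb{E}$ takes values in $\mf{Vec}^{\Gamma}(X)$ and these properties are imposed in the equivariant category, every morphism involved in the construction of $\alpha_V$ is $\Gamma$-equivariant, as is the evaluation of $\mb{E}$ on the $G$-module iso above via \eqref{cdf}. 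Naturality in $V$ follows from the corresponding diagrams of $G$-modules being sent to $\Gamma$-equivariant diagrams.

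The hard part will be the bookkeeping around the compatibilities: one must confirm that the direct-limit $\Gamma$-action \eqref{a1} matches the $\Gamma$-action on $\pi_*\mc{O}_{E_G}$ coming from $\widetilde{\eta}$, that this is compatible with the $G$-algebra structure built in \eqref{trivac1}--\eqref{trivac2}, and that Nori's non-equivariant natural isomorphism witnessing $E_G \,\cong\, \mb{\overline{E}}(K[G])$ preserves all this structure. However, since Lemma \ref{cext} ensures the whole construction proceeds inside $\mf{Alg}^{\Gamma}(X)$, these checks are formal consequences of the functoriality already established, and the equivalence drops out of Nori's non-equivariant theorem.
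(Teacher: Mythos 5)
Your proposal is correct, and like the paper it reduces everything to Nori's non-equivariant equivalence; but the mechanism you use to transport the equivariant structure is genuinely different from the paper's. The paper never opens up Nori's constructions at all: it observes that an object of $\mf{Nor}^{\Gamma}(X)$ is precisely an object $\Pi(\mb{E})$ of $\mf{Nor}(X)$ together with a family of natural automorphisms $\widetilde{\eta}_h\,:\,\Pi(\mb{E})\to\Pi(\mb{E})$, $h\in\Gamma$, satisfying $\widetilde{\eta}_{gh}=\widetilde{\eta}_g\circ\widetilde{\eta}_h$, and then simply invokes the naturality of Nori's isomorphism $\Phi\,:\,1_{\mf{Nor}(X)}\to\mb{N}_0\circ\mb{N}_1$ applied to the morphisms $\widetilde{\eta}_h$: the resulting commutative square says exactly that $\Phi(\mb{E})$ intertwines the two $\Gamma$-structures, so $\Phi$ descends to the equivariant categories for free (and the other composite is treated "similarly"). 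You instead re-derive both composite isomorphisms explicitly --- the identification of $\overline{E}(K[G])$ with $\pi_*\mc{O}_{E_G}$ in one direction, and Nori's $\alpha_V$ built from $K[G]\otimes V\cong K[G]\otimes V_{\mathrm{triv}}$ in the other --- and check $\Gamma$-equivariance of each constituent morphism via \eqref{cdf}. Your route is more self-contained and makes visible what the isomorphisms actually are, but it leaves you with the "bookkeeping" you flag at the end, which is precisely the verification the paper's naturality trick makes unnecessary: since every map in sight is the image under $\mb{E}$ (or $\mb{\overline{E}}$) of a $G$-module map, or is itself a natural transformation, equivariance is automatic once the group action is packaged as automorphisms in the non-equivariant category. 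If you adopt that packaging, your two explicit verifications collapse into two instances of a single naturality square.
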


\begin{proof} The proof follows verbatim from \cite[Proposition 2.5 and Lemmas 2.6, 2.7, 2.8]{Nor2} once one replaces the category of sheaves of quasi-coherent $\mathcal{O}_X$-modules with its $\Gamma$-equivariant analogue.
\end{proof}

\section{Filtration functor for vector bundles}

Let $X$ be a toric variety defined over $K$, corresponding to a fan $\Sigma$ in a lattice $N$ (see \cite{Dan, Ful, Oda} for details). 
Let $T$ denote the algebraic torus whose one-parameter subgroups are indexed by $N$.
Then $X$ admits an action of $T$ with an open dense $T$--orbit $O$. Let $n\,=\,\text{dim}(X)\,=\,\text{dim}(T)$. 
Denote the set of all $d$--dimensional cones of $\Sigma$ by
$\Sigma(d)$. Let $|\Sigma(1)|$ be the set of primitive integral generators 
of elements of $\Sigma(1)$.

Define $M \,=\, \text{Hom}_{\ZZ}(N, \ZZ)$. 
Then $M$ is isomorphic to the group of characters of $T$.
For any
$\sigma \,\in\, \Sigma$, denote the corresponding affine toric subvariety of $X$ by $X_{\sigma}$; also 
define $$\sigma^{\perp}\,= \,\{u \,\in\, M
\,\mid\, u(n)\,=\, 0 \, \ \forall \ n \,\in\, \sigma\}$$ and
$M_{\sigma} \,:= \,M/{\sigma^{\perp}}$. Then $M_{\sigma}$ is the
character group of the maximal sub-torus $T_{\sigma} \,\subset\, T$ that has a fixed point in 
$X_{\sigma}$. 
Let $\overline{\mf{Vec}}$ be the category of $K$--vector spaces of
countable dimension; the morphisms are $K$--linear homomorphisms.

\begin{defn} A {\it decreasing filtration} $\mc{V}$ on a $K$--vector space $V
\,\in\, \mf{\overline{Vec}}$ is a
collection $\{ V(i)\,\mid\, i \,\in\, \mathbb{Z}\}$ of subspaces of $V$ such that
$V(i) \,\supseteq\, V(i+1)$ for each $i$.
We say $\mc{V}$ is full if given any $v\,\in\, V$ there exists an integer $i$
depending on $v$ such that $v \,\in\, V(i)$.
\end{defn}

\begin{defn}\label{sfvs}
A $\Sigma$--{\it filtration} on a vector space $V\,\in\, \mf{\overline{Vec}}$ is a collection of
 full decreasing filtrations
$$ \mc{V}^{\rho} \,:\ ~ \cdots \,\supseteq\,V^{\rho}(i-1) \,\supseteq\, V^{\rho}(i)\,
\supseteq\, V^{\rho}(i+1) \,\supseteq\,\cdots\, , $$
on $V$, where $\rho \,\in\, |\Sigma(1)|$. We denote the data $(V, \{V^{\rho}(i) \} )$ by $\mc{V}^{\bullet}$
and say that $\mc{V}^{\bullet}$ is a $\Sigma$--filtered vector space
on $X$.
If the vector space $V$ is finite dimensional then $\mc{V}^{\bullet}$ is said to be
finite dimensional.

A {\it morphism} of $\Sigma$--filtered vector spaces $\phi\,:\, \mc{V}^{\bullet}
\,\longrightarrow\, \mc{W}^{\bullet}$ is a homomorphism of vector
spaces $\phi\,:\, V \,\longrightarrow\, W$, such that
$\phi(V^{\rho}(i) )\,\subseteq\, W^{\rho}(i) $ for each $i$ and $\rho$. Such a morphism
is injective (respectively, surjective) if the underlying
homomorphism of vector spaces is injective (respectively, surjective).
\end{defn}

The category of $\Sigma$--filtered vector spaces is a tensor category with the following
 tensor product: 
\begin{equation}\label{ten0}
\mc{V}^{\bullet} \otimes \mc{W}^{\bullet} \,=\, \{ V\otimes W, (V\otimes W)_{\dagger}^{\rho}(j) \} 
\end{equation}
 where
\begin{equation}\label{ten}
(V\otimes W)_{\dagger}^{\rho}(j) \,=\, \sum_{p+q=j} V^{\rho}(p)
\otimes W^{\rho}(q)\,. \end{equation}

\begin{defn}\label{sfa}
Let $(A\, ,m)$ be a commutative, associative, $K$--algebra, where $A \,\in\, \mf{\overline{Vec}}$ is the
underlying $K$--vector space of the algebra, and $m\,:\, A \tensor A\,\longrightarrow\, A$ is the
multiplication operation. A $\Sigma$--filtration on $(A,\,m)$ is a
$\Sigma$--filtration $\mc{A}^{\bullet}\,= \,(A,\, \{ A^{\rho}(i) \})$ on the vector space $A$
such that
$$m(A^{\rho}(i) \tensor A^{\rho}(j) ) \,\subseteq\, A^{\rho}(i+j) $$
for every $\rho \,\in \, |\Sigma(1)|$ and $i\, ,j \,\in\, \mathbb{Z}$.

The above data $ (A,\, \{A^{\rho}(i) \},\, m) $ is denoted by $(\mc{A}^{\bullet},\, m)$, and is
called a $\Sigma$--filtered algebra on $X$.

A morphism of $\Sigma$--filtered algebras $(\mc{A}^{\bullet}_1, \, 
m_1) \,\longrightarrow\, (\mc{A}^{\bullet}_2,\, m_2)$ is a homomorphism of underlying
$K$--algebras that respects the filtrations. Equivalently, it is a
morphism of $\Sigma$--filtered vector spaces $$\phi\,:\,
\mc{A}^{\bullet}_1 \,\longrightarrow\, \mc{A}^{\bullet}_2 $$ such that $\phi \circ
m_1 \,=\, m_2 \circ (\phi \otimes \phi)$.
\end{defn}

\begin{defn}\label{csfvs}
A {\it compatible} $\Sigma$--filtered vector space on $X$ is a $\Sigma$--filtered vector
space $\mc{F}^{\bullet}\,=\, (F, \,\{F^{\rho}(i) \})$ such that
for every $\sigma \,\in\, \Sigma$, there exists a decomposition of the vector space
\begin{equation}\label{ca1}
F \,=\, \bigoplus_{[u] \in M_{\sigma}} F^{\sigma}_{[u]}\, ,
\end{equation}
with the following property: For each $\sigma$ and for each $\rho\,\in\,
\sigma \bigcap |\Sigma(1)|$
\begin{equation}\label{ca2}
F^{\rho} (i)\,= \,\bigoplus_{u(\rho) \ge i} F^{\sigma}_{[u]}\, .
\end{equation}

Similarly a compatible $\Sigma$--filtered algebra on $X$ is a $\Sigma$--filtered
algebra $(\mc{F}^{\bullet},\, m) $ whose underlying
$\Sigma$--filtered vector space $\mc{F}^{\bullet}$ is compatible, and the subspaces $F^{\sigma}_{[u]}$ in \eqref{ca1} satisfy
\begin{equation}\label{ca3}
\sum_{[u] + [v] \,=\, [w]} m ( F^{\sigma}_{[u]}
\otimes F^{\sigma}_{[v]} )\,\subseteq\,F^{\sigma}_{[w]}\, .
\end{equation}

A {\it morphism} between compatible $\Sigma$--filtered vector spaces (respectively, algebras)
is simply a morphism between the
underlying $\Sigma$--filtered vector spaces (respectively, algebras).
\end{defn}

\begin{remark}\label{remcomp} In the above definition it is enough to require that 
a decomposition \eqref{ca1} satisfying \eqref{ca2} exists for every maximal cone 
$\sigma$ in the fan $\Sigma$. A decomposition corresponding to a maximal cone
induces decompositions corresponding to its subcones.
\end{remark}

\begin{remark}\label{remact} 
A decomposition as in \eqref{ca1} corresponds to an action of $T_{\sigma} $ on $F$.
\end{remark}

Given a $\Sigma$--filtered vector space $(F,\, \{F^{\rho}(i) \})$, a decomposition \eqref{ca1} that satisfies 
\eqref{ca2}, will be called a {\it compatible decomposition}.

Let $\mf{\overline{Fvec}}(\Sigma)$ and $\mf{\overline{Cvec}}(\Sigma)$ denote the categories of 
$\Sigma$--filtered vector spaces and compatible $\Sigma$--filtered vector spaces on 
$X$ respectively. Their finite dimensional counterparts are denoted by
\begin{equation}\label{not1}
\mf{Fvec}(\Sigma)\ \ \text{ and } \ \ \mf{Cvec}(\Sigma)
\end{equation}
respectively. These are additive categories (see \cite[Section12.3]{stacks} for a definition).

The category $\mf{\overline{Cvec}}(\Sigma)$ is a tensor category with product
as in \eqref{ten}: Suppose $\mc{V}^{\bullet}$ and
$\mc{W}^{\bullet}$ are compatible $\Sigma$--filtered vector spaces.
Let $V \,= \bigoplus_{[u] \in M_{\sigma}} V^{\sigma}_{[u]} $ and
$W \,= \,\bigoplus_{[u] \in M_{\sigma}} W^{\sigma}_{[u]}$ be
compatible decompositions for $\mc{V}^{\bullet}$ and
$\mc{W}^{\bullet}$ respectively. Define
$$ (V \otimes W)^{\sigma}_{[u]} \,=\,
\bigoplus_{[u_1]+[u_2]=[u]} V^{\sigma}_{[u_1]} \otimes
W^{\sigma}_{[u_2]} \, .$$ Then 
$$V
\otimes W \,=\, \bigoplus_{[u] \in M_{\sigma} } (V \otimes
W)^{\sigma}_{[u]} $$ is a compatible decomposition for
$\mc{V}^{\bullet} \otimes \mc{W}^{\bullet}$.

Let $\mf{Vec}^T(X)$ (respectively, $\mf{Pbun}_G^{T}(X)$) denote the category of $T$--equivariant vector
bundles (respectively, $T$--equivariant principal $G$--bundles) on $X$. There exists a fully
faithful, surjective functor
$$\mb{F}\,:\, \mf{Vec}^T(X)\,\longrightarrow \,\mf{Cvec}(\Sigma)$$
(see \cite[Theorem 2.2.1]{Kly}). 
We shall sketch the construction of $\mb{F}$.
Let $\xi \in \mf{Vec}^T(X) $ be a bundle of rank $r$.
Fix a closed point $x_0$ in the open $T$--orbit $O\,\subset\, X$. Denote by $F$ the
fiber $\xi(x_0)$.
Let $\sigma$ be a cone of $\Sigma$ and $X_{\sigma}$ the corresponding affine toric variety.
Denote by $\xi_{\sigma}$ the restriction of $\xi $ to $X_{\sigma}$.
Consider the action of $T$ on the space of sections of
$\xi_{\sigma}$ defined by
$$
(t \cdot s)(x) \,=\, t s(t^{-1} x)
$$
for any point $x\in X_{\sigma}$, any element $t \in T$, and any section
$s$ of $\xi_{\sigma}$. A section $s$ is said to be semi-invariant if $t\cdot s \,=\, u(t) s$
for some character $u$ of $T$.

 It was shown by Klyachko, \cite[Proposition 2.1.1]{Kly}, that there exists
 a framing (which is not unique) of $\xi_{\sigma}$ by semi-invariant sections. 
 Fix such a framing $(s_1, \ldots, s_r)$.
 Let
$S_{\sigma}$ be the $T$--submodule of
 $H^{0}(X_{\sigma},\, \xi_{\sigma})$ generated by the semi-invariant sections $s_1, \ldots, s_r$.
Evaluation at $x_0$
gives an isomorphism of vector spaces $ev_0\,:\, S_{\sigma}\,\longrightarrow \, F$.
 This isomorphism induces a $T$--module structure on $F$, or equivalently, a decomposition
\begin{equation}\label{dec1}
F \,=\, \bigoplus_{u \in M} F^{\sigma}_u\, .
\end{equation} 
Restricting to the action of
$T_{\sigma}$ on $\xi_{\sigma}$, we similarly get a
decomposition 
\begin{equation}\label{dec2}
F \,=\, \bigoplus_{[u] \in M_{\sigma}} F^{\sigma}_{[u]}\, . \end{equation}

The decompositions \eqref{dec1} and \eqref{dec2} may depend on the choice of the semi-invariant framing of $\xi_{\sigma}$.
However, for each $\rho \in |\Sigma(1)|$, the subspaces
$$F^{\rho}(i) \,:=\, \bigoplus_{[u]\in M_{\sigma}, u(\rho) \ge i} F^{\sigma}_{[u]}\, ,\quad {\rm where} \; \sigma \; {\rm is \; such \; that}\;
\rho \,\in \,|\Sigma(1)| \bigcap \sigma \, ,$$
 are independent of the choice of
$\sigma$ containing $\rho$ as well as the framing (see \cite{Kly}).

Then $\mb{F}(\xi) $ is defined to be the compatible $\Sigma$--filtered
vector space $\mathcal{F}^{\bullet}\, = \,(F,\, \{ F^{\rho}(i)\})$ on $X$.

\begin{lemma}\label{Ftensor} The functor $\mb{F}\,:\, \mf{Vec}^T(X)\,\longrightarrow \, 
\mf{Cvec}(\Sigma)$ satisfies
$$ \mb{F}(\xi_1) \otimes \mb{F}(\xi_2) \,=\,\mb{F}( \xi_1 \otimes \xi_2) $$
for all $\xi_1, \, \xi_2 \,\in\, \mf{Vec}^T(X)$.
\end{lemma}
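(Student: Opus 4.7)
The plan is to verify, at the level of fibers at the chosen base point $x_0 \in O$ and of the relevant $T$-module decompositions, that the Klyachko construction commutes with tensor product; the equality of filtrations will then be forced.

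First, since $x_0$ is a point, $(\xi_1 \otimes \xi_2)(x_0) = \xi_1(x_0) \otimes \xi_2(x_0) = F_1 \otimes F_2$, so the underlying vector spaces of $\mb{F}(\xi_1 \otimes \xi_2)$ and $\mb{F}(\xi_1) \otimes \mb{F}(\xi_2)$ agree. By Remark \ref{remcomp}, it suffices to check that for every top-dimensional cone $\sigma \in \Sigma$, the compatible decomposition of $F_1 \otimes F_2$ used in the construction of $\mb{F}(\xi_1 \otimes \xi_2)$ coincides with the tensor-product decomposition described after \eqref{not1}.

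Next, I would fix $\sigma$ and choose semi-equivariant framings $(s_1, \ldots, s_{r_1})$ and $(s'_1, \ldots, s'_{r_2})$ of $\xi_{1,\sigma}$ and $\xi_{2,\sigma}$, guaranteed by \cite[Proposition 2.1.1]{Kly}. Let the characters of $s_i$ and $s'_j$ be $u_i$ and $u'_j$, respectively. The tensor products $\{s_i \otimes s'_j\}$ form a framing of $(\xi_1 \otimes \xi_2)_\sigma$, and each $s_i \otimes s'_j$ is semi-equivariant with character $u_i + u'_j$, because the $T$-action on sections defined by $(t \cdot s)(x) = t s(t^{-1} x)$ is multiplicative with respect to tensor products. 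Evaluating at $x_0$, this shows that the decomposition \eqref{dec2} for $\xi_1 \otimes \xi_2$ is
\[
(F_1 \otimes F_2)^{\sigma}_{[u]} \,=\, \bigoplus_{[u_1]+[u_2]=[u]} (F_1)^{\sigma}_{[u_1]} \otimes (F_2)^{\sigma}_{[u_2]}\, ,
\]
which is precisely the tensor-product decomposition of the compatible $\Sigma$-filtered vector space $\mb{F}(\xi_1) \otimes \mb{F}(\xi_2)$.

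Finally, for each ray $\rho \in \sigma \cap |\Sigma(1)|$ and each $j \in \ZZ$, the filtration piece for $\mb{F}(\xi_1 \otimes \xi_2)$ is
\[
(F_1 \otimes F_2)^{\rho}(j) \,=\, \bigoplus_{u(\rho) \geq j}\, \bigoplus_{[u_1]+[u_2]=[u]} (F_1)^{\sigma}_{[u_1]} \otimes (F_2)^{\sigma}_{[u_2]}\, ,
\]
which by a straightforward rearrangement (grouping according to $p = u_1(\rho)$ and $q = u_2(\rho)$ with $p+q \geq j$) equals $\sum_{p+q=j} F_1^{\rho}(p) \otimes F_2^{\rho}(q)$, matching the tensor-product filtration in \eqref{ten}. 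Combined with the agreement of the underlying vector spaces, this yields the claimed equality $\mb{F}(\xi_1) \otimes \mb{F}(\xi_2) = \mb{F}(\xi_1 \otimes \xi_2)$. I do not anticipate a serious obstacle; the only mildly subtle point is that the framings and the decompositions \eqref{dec1}, \eqref{dec2} are not canonical, but the tensor-product decomposition we obtain is a compatible decomposition of $F_1 \otimes F_2$ that induces the correct filtrations, which are canonical, so ambiguity of framings is harmless.
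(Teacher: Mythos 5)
Your proposal is correct and follows essentially the same route as the paper: both choose semi-equivariant framings of $\xi_{1,\sigma}$ and $\xi_{2,\sigma}$, observe that the tensor products of these sections form a semi-equivariant framing of $(\xi_1\otimes\xi_2)_\sigma$ with summed characters, and then match the resulting filtrations. The ``straightforward rearrangement'' you invoke at the end is exactly the paper's final step, where integrality of $u_1(\rho)$ and $u_2(\rho)$ is used to split $u_1(\rho)+u_2(\rho)\geq j$ as $p+q=j$ with $u_1(\rho)\geq p$ and $u_2(\rho)\geq q$.
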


\begin{proof} 
Let $V\,=\, \xi_1(x_0)$ and $W\,=\, \xi_2(x_0)$ with $r_i\,=\, \text{dim}(\xi_i(x_0))$. Clearly $V\otimes W \,=\, (\xi_1 \otimes \xi_2) (x_0)$.
Denote,
$$\mb{F}( \xi_1)\,=\,(V, \{V^{\rho}(j)\}) , \; 
\mb{F}( \xi_2) \,=\, (W, \{W^{\rho}(j)\}), \; {\rm and} \;
\mb{F}( \xi_1 \otimes \xi_2) \,=\, (V \otimes W, \{ (V \otimes W)^{\rho}(j) \} ) \, .$$
By \eqref{ten0} and \eqref{ten}, we need to show that
\begin{equation}\label{ten1}
 (V \otimes W)^{\rho}(j) \,=\, \sum_{p+q=j}
 V^{\rho}(p) \otimes W^{\rho}(q) \,. 
\end{equation}

Consider any $\rho \,\in\, |\Sigma(1)|$. Let $\sigma$ be any cone that contains $\rho$. 
 Fix semi-invariant
frames $s^{i}_1,\, \cdots ,\,s^{i}_{r_i}$ of $(\xi_i)_{\sigma}$. Let
$[u^{i}_k] \,\in\, M_{\sigma}$ be the character corresponding to the action of $T_{\sigma}$ on
$s^{i}_k$.
We have compatible decompositions 
$$
V \,=\, \bigoplus_{1 \le k \le r_1} V^{\sigma}_{[u^1_k]} \quad {\rm and} \quad W 
\,=\, \bigoplus_{1 \le l \le r_2} W^{\sigma}_{[u^2_l]}
$$
induced by these frames. Note that $\{ s^1_k \otimes s^2_l \}$ is a
semi-invariant frame of $(\xi_1)_{\sigma} \otimes (\xi_2)_{\sigma}$, which induces a compatible decomposition $$V\otimes W \,=\,
\bigoplus_{[u] \in M_{\sigma}} (V\otimes W)^{\sigma}_{[u]}\, ,$$ where
\begin{equation}\label{Ten} (V\otimes W)^{\sigma}_{[u]} \, =\, \bigoplus_{[u^{1}_k]+[u^{2}_l]=[u]}
 V^{\sigma}_{[u^{1}_k]} \otimes W^{\sigma}_{[u^{2}_l]} 
 \end{equation}
Note that $$V^{\rho}(p) \,= \, \bigoplus_{u^1_k(\rho)\ge p}
V^{\sigma}_{[u^1_k]} \,, ~~~ W^{\rho}(q) \,= \,
\bigoplus_{u^2_l(\rho)\ge q} W^{\sigma}_{[u^2_l]} \, ,~~~ (V\otimes W)^{\rho}(j)\,=\, \bigoplus_{u(\rho) \ge j } (V\otimes W)^{\sigma}_{[u]}\,.$$

 \noindent Therefore, by \eqref{Ten} we get, 
$$V^{\rho}(p) \otimes W^{\rho}(j-p)
 \,\subset\, ( V \otimes W)^{\rho}(j) ~~~~~ {\rm for\, any}~~ p\in \mathbb{Z} \,.$$
 To satisfy \eqref{ten1}, we need to verify that
 $$ ( V \otimes W)^{\rho}(j) \,\subset\, \sum_{p+q=j}
 V^{\rho}(p) \otimes W^{\rho}(q) \,.$$

 Take any $w \,\in \,( V \otimes W)^{\rho}(j)$. Then $w \,=\, \sum
 w_t$, where each $w_t \,\in\, (V\otimes W)^{\sigma}_{[u_t]} $ for some $[u_t] \in M_{\sigma}$ such that $u_t(\rho) \,\ge\,
 j$.
 
It suffices to show that each $w_t \in \sum_{p+q=j}
 V^{\rho}(p) \otimes W^{\rho}(q)$.
 Since $\{ s^{1}_k(x_0) \otimes s^2_l (x_0) \} $ is a basis for $V \otimes W$, we have
$$ w_t \,=\, \sum_{[u^{1}_k]+[u^{2}_l]=[u_t]} a_{kl}^t \, s^{1}_k(x_0) \otimes s^2_l (x_0) \; {\rm where} \; a_{kl}^t \in K \,. $$

 \noindent Note that $[u^{1}_k]+[u^{2}_l]\,=\,[u_t]$ implies $u^1_k(\rho) + u^2_l(\rho)
\,=\, u_t(\rho) \ge j $. Since $u^1_k(\rho)$ 
and $ u^2_l(\rho)$ are integers, there
 exist integers $p$ and $q$ such that $u^1_k(\rho) \ge p$, $u^2_l(\rho) \ge
 q$, and $p+q =j$. It follows that
 $$s^{1}_k(x_0) \otimes s^2_l (x_0) \,\in\, V^{\rho}(p) \otimes W^{\rho}(q) \, , \quad {\rm and}
 \quad
w_t \in \sum_{p+q=j} V^{\rho}(p) \otimes W^{\rho}(q) \,.$$ This completes the proof.
\end{proof}

Since $\mb{F}\,:\, \mf{Vec}^T(X) \,\longrightarrow\, \mf{Cvec}(\Sigma)$ is an equivalence of
additive categories, it is an exact functor. The quasi-inverse
$\mb{K}\,:\, \mf{Cvec}(\Sigma) \,\longrightarrow\, \mf{Vec}^T(X)$ of $\mb{F}$, constructed by
Klyachko in \cite{Kly}, respects direct sums and tensor products.
Being an equivalence of categories, it is also exact and
faithful.

Consider the category $\mf{Cnor}(\Sigma)$ whose objects are functors
$$\mb{M}\,:\, G\text{--}{\rm mod} \,\longrightarrow\, \mathfrak{Cvec}(\Sigma)$$
(see \eqref{not1}) that satisfy properties
F1-F4, and whose morphisms are natural isomorphisms of functors.

\begin{theorem}\label{thm1}
There exists an equivalence of categories between $\mf{Cnor}(\Sigma)$ (defined above) and $\mf{Pbun}_G^{T}(X)$ 
(the category of $T$--equivariant principal $G$--bundles on the toric variety $X$).
\end{theorem}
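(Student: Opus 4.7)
The plan is to reduce Theorem \ref{thm1} to the equivariant Nori correspondence (Theorem \ref{equiv1} with $\Gamma = T$) by post-composing functors with Klyachko's equivalence $\mb{F}\,:\,\mf{Vec}^T(X) \,\longrightarrow\, \mf{Cvec}(\Sigma)$ and its quasi-inverse $\mb{K}$. Concretely, I define
$$\mb{F}_{\ast}\,:\,\mf{Nor}^T(X) \,\longrightarrow\, \mf{Cnor}(\Sigma)\, , \qquad \mb{E} \,\longmapsto\, \mb{F} \circ \mb{E}\, ,$$
and similarly $\mb{K}_{\ast}\,:\,\mf{Cnor}(\Sigma) \,\longrightarrow\, \mf{Nor}^T(X)$, $\mb{M} \,\longmapsto\, \mb{K}\circ \mb{M}$. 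Morphisms (natural isomorphisms of functors) are sent to their whiskerings by $\mb{F}$ or $\mb{K}$. The theorem then follows by composing the equivalence $\mb{F}_{\ast}$ with the equivalence $\mb{N}_0^T\,:\,\mf{Pbun}_G^T(X) \,\longrightarrow\, \mf{Nor}^T(X)$ provided by Theorem \ref{equiv1}.

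The main thing to check is that the post-composition $\mb{F} \circ \mb{E}$ of a Nori functor $\mb{E}\in \mf{Nor}^T(X)$ lies in $\mf{Cnor}(\Sigma)$, i.e., it satisfies F1--F4. Property F1 holds because $\mb{F}$, being an equivalence of abelian $K$-linear categories, is $K$-additive and exact. Property F2 is exactly Lemma \ref{Ftensor}: for $V, W \in G\text{--mod}$,
$$(\mb{F} \circ \mb{E})(V \otimes W) \,=\, \mb{F}(\mb{E}(V) \otimes \mb{E}(W)) \,=\, \mb{F}(\mb{E}(V)) \otimes \mb{F}(\mb{E}(W))\, .$$
Property F3 reduces to the assertion that $\mb{F}$ respects the associativity and commutativity constraints and sends the unit object to the unit; this is transparent from the construction of $\mb{F}$, since these constraints are strict on both sides (they come from canonical identifications of tensor products of fibers and of $\mathbb{Z}$-graded filtrations as in \eqref{ten}). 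For the unit, the trivial $T$-equivariant line bundle $\mc{O}_X$ has fiber $K$ at $x_0$ with the trivial character decomposition, which yields the unit of $(\mf{Cvec}(\Sigma),\otimes)$. Property F4 is preserved because $\mb{F}$ is faithful, being an equivalence. An identical verification shows $\mb{K}_{\ast}$ lands in $\mf{Nor}^T(X)$.

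Finally, the natural isomorphisms $\mb{F}\circ \mb{K} \simeq 1_{\mf{Cvec}(\Sigma)}$ and $\mb{K}\circ \mb{F} \simeq 1_{\mf{Vec}^T(X)}$ whisker to natural isomorphisms $\mb{F}_{\ast}\circ \mb{K}_{\ast} \simeq 1_{\mf{Cnor}(\Sigma)}$ and $\mb{K}_{\ast}\circ \mb{F}_{\ast} \simeq 1_{\mf{Nor}^T(X)}$, so $\mb{F}_{\ast}$ and $\mb{K}_{\ast}$ are quasi-inverse equivalences. Combining with Theorem \ref{equiv1} yields the equivalence $\mf{Pbun}_G^T(X) \simeq \mf{Nor}^T(X) \simeq \mf{Cnor}(\Sigma)$. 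The only subtle point is F3 for the unit and symmetry constraints, but this follows at once from the fact that Klyachko's construction is functorial and the tensor products on both sides are given by the same canonical formulas on fibers and filtrations; no serious obstacle arises.
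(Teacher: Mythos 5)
Your proposal is correct and follows essentially the same route as the paper: both reduce the statement to Theorem \ref{equiv1} by post-composing with Klyachko's equivalence $\mb{F}$ and its quasi-inverse $\mb{K}$ to obtain $\mb{F}_{\ast}$ and $\mb{K}_{\ast}$, and then check these are quasi-inverse equivalences between $\mf{Nor}^T(X)$ and $\mf{Cnor}(\Sigma)$. Your explicit verification that $\mb{F}\circ\mb{E}$ satisfies F1--F4 (via Lemma \ref{Ftensor} for F2) is slightly more detailed than the paper's, which leaves this implicit, but the argument is the same.
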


\begin{proof}
Let
\begin{equation}\label{not3}
\mf{Nor}^{T}(X)
\end{equation}
be the category in \eqref{not2} obtained by substituting $T$ in place of
$\Gamma$. Consider the functor $\mb{F}_{\ast} \,: \, \mf{Nor}^{T}(X) \,\longrightarrow\,
\mf{Cnor}(\Sigma)$ defined by composition with $\mb{F}$,
 $$ \mb{F}_{\ast}(\mb{E}) \,=\, \mb{F}\circ \mb{E}\ \text{for any}\ \mb{E} \,\in\,
\mf{Nor}^{T}(X)\,. $$

Similarly, composition with $\mb{K}$ gives a functor
$\mb{K}_{\ast}\,:\, \mf{Cnor}(\Sigma)\,\longrightarrow\, \mf{Nor}^{T}(X) $,
$$ \mb{K}_{\ast}(\mb{M}) \,=\, \mb{K}\circ \mb{M}\ \text{\rm for any}\ \mb{M} \,\in\,
\mf{Cnor}(\Sigma)\,. $$

It is easily observed from the construction of $\mb{K}$ that
$\mb{F} \circ \mb{K}\,=\, 1_{\mf{Cvec}(\Sigma)} $. Therefore,
$$\mb{F}_{\ast} \circ \mb{K}_{\ast} \,=\, 1_{\mf{Cnor}(\Sigma)} \, .$$
Since $\mb{F}$ and $\mb{K}$ are fully faithful, so are
$\mb{F}_{\ast}$ and $\mb{K}_{\ast}$. Hence, they induce an
equivalence of categories between $\mf{Cnor}(\Sigma)$ and $\mf{Nor}^{T}(X)$.
Then, by Theorem \ref{equiv1},
$\mf{Cnor}(\Sigma)$ and $\mf{Pbun}_G^{T}(X)$ are equivalent categories.
\end{proof}

\section{Equivariant trivialization on affine toric variety} 

The credit for the following result goes to one of our referees.

\begin{theorem}\label{etriv} Let $f\,:\, E_G \,\longrightarrow\, Y$ be a $T$-equivariant principal $G$-bundle where $Y$ is an affine toric variety, defined over $K$, under the action of the torus $T$. Assume $G$ is a reductive algebraic group. Then $E_G$ is equivariantly trivializable. 
\end{theorem}

\begin{proof} $G \times T $ acts on $E_G$ with an open orbit, the preimage under $f$ of the open 
	$T$-orbit in $Y$. Also, the variety $E_G$ is affine, as the morphism $f$ is so. Thus, $E_G$ contains a unique closed $(G\times T)$--orbit, $(G \times T)\cdot e_0$: the image of the closed $T$--orbit, say	$T\cdot y_0$, in $Y$. 
	
	Consider first the case where $y_0$ is fixed by $T$. Then the (scheme-theoretic) isotropy group 
	$(G \times T)_{e_0} \subset G\times T $ intersects $G$ trivially since the latter acts freely on 
	$E_G$. Moreover, $(G \times T)_{e_0}$ is sent onto $T$ by the second projection. Indeed, for any 
	$t \in T$, we have $f(t\cdot e_0) = t \cdot f(e_0) = f(e_0) $, hence $t \cdot e_0 \in G\cdot e_0$. 
	Thus the second projection yields an isomorphism $(G\times T)_{e_0} \cong T$. So there exists a unique homomorphism $\rho\,:\, T \,\longrightarrow\, G $ such that 
	$$ (G\times T)_{e_0} \,=\, \{ (\rho(t),t) \mid t \in T \}\,. $$
	
	In particular, the closed $(G\times T)$--orbit is separable and its isotropy group is linearly reductive. We may now apply \cite[Proposition 8.5]{BR} which yields an equivariant isomorphism
	$$ E_G \cong (G\times T) \times^T F$$
	for some affine $T$--variety $F$. Here $(G\times T) \times^T F$ denotes the associated fiber bundle 
	to the principal $T$--bundle $ G\times T \,\longrightarrow\, (G\times T)/T $ (where $T$ is viewed as a subgroup of $G\times T$ via $t \mapsto (\rho(t), t)$) and the $T$--variety $F$. Thus, $Y = E_G/G \cong F$, and 
	$E_G \cong (G\times T) \times^T Y$. This yields the desired isomorphism 
	$$ E_G \cong G \times^T Y \,, $$ where $T$ acts on $G$ via left multiplication through $\rho$ and on $Y$ via the given action. 
	
	The general proof reduces to the former one as in proof of Corollary 2.3 in \cite{BDP2}.	
\end{proof}

\section{Filtered algebra associated to an equivariant principal bundle}

Henceforth, we assume that $K$ has characteristic zero. Then the reductive group $G$ is linearly
reductive. Let $E_G$ be a $T$--equivariant principal 
$G$--bundle over $X$.

Given any $\mb{E} \in \mf{Nor}^T(X)$, define $\mb{E}_{\sharp} \,\in\,
\mf{Cnor}(\Sigma)$ by
$$
\mb{E_{\sharp}} \,=\, \mb{F} \circ \mb{E} \,.
$$
It is easily checked that
$\mb{E}_{\sharp}$ is faithful. Moreover, it preserves tensor products as a consequence of Lemma \ref{Ftensor}.

\noindent Let $\mb{O}: \mf{Fvec}(\Sigma) \longrightarrow \mf{Vec}$ be the forgetful functor that maps a $\Sigma$--filtered 
vector space to its underlying vector space.
Define ${E}_{\sharp} \,:=\, \mb{O} \circ \mb{E_{\sharp}}$. Note that
$ {E}_{\sharp}(V) \,=\, \mb{E}(V)(x_0)$. It is evident that ${E}_{\sharp}$ preserves tensor products.

 Let $\phi_1,\phi_2 \,:\, V \,\longrightarrow\, W $ be two morphisms of $G$--modules
such that ${E}_{\sharp}(\phi_1) \,=\,{E}_{\sharp} (\phi_2)$. Then $\mb{E}_{\sharp}(\phi_1) =\mb{E}_{\sharp}(\phi_2)$.
By faithfulness of $\mb{E}_{\sharp}$, $\phi_1 = \phi_2$. Therefore, $ {E}_{\sharp} $ is faithful.

\begin{lemma}\label{cext2}
There exists a unique extension of $\mb{E_{\sharp}}$ (respectively,
${E}_{\sharp}$) to a functor $\mb{\overline{E}_{\sharp}}\,:\,
G\text{--}\overline{\rm mod} \,\longrightarrow\, \overline{\mf{Cvec}}(\Sigma)$ (respectively,
${\overline{{E}}_{\sharp}}\,:\, G\text{--}\overline{\rm mod} \,\longrightarrow\, \overline{\mf{Vec}}$)
 that preserves direct limits and tensor products.
\end{lemma}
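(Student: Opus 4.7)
The plan is to construct $\mb{\overline{E}_{\sharp}}$ as a colimit extension, mirroring the construction of $\mb{\overline{E}}$ in Lemma \ref{cext}. Given $\overline{V} \in G\text{--}\overline{\text{mod}}$, consider the directed system of finite dimensional $G$--submodules $V \,\subseteq\, \overline{V}$ (which is cofiltered since any two finite dimensional submodules lie in their finite dimensional sum). Apply $\mb{E}_{\sharp}$ to obtain a directed system in $\mf{Cvec}(\Sigma)$, and define $\mb{\overline{E}_{\sharp}}(\overline{V})$ to be the direct limit in $\overline{\mf{Cvec}}(\Sigma)$.

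To verify that this direct limit exists as a compatible $\Sigma$--filtered vector space, first take the underlying space $\overline{E_{\sharp}}(\overline{V}) \,=\, \varinjlim E_{\sharp}(V)$ in $\overline{\mf{Vec}}$. For each ray $\rho \,\in\, |\Sigma(1)|$, define the filtration piece $\overline{E_{\sharp}}(\overline{V})^{\rho}(i) \,=\, \varinjlim E_{\sharp}(V)^{\rho}(i)$, using the fact that $\mb{E}_{\sharp}$ applied to the inclusion $V \hookrightarrow W$ is a morphism in $\mf{Cvec}(\Sigma)$ and hence maps $E_{\sharp}(V)^{\rho}(i)$ into $E_{\sharp}(W)^{\rho}(i)$. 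The main delicate point is the compatible decomposition for each maximal cone $\sigma$: by Remark \ref{remact}, such a decomposition on $E_{\sharp}(V)$ corresponds to a $T_{\sigma}$--action on the fiber $\mb{E}(V)(x_0)$. Because $\mb{E}$ lands in $\mf{Vec}^T(X)$ (not merely $\mf{Vec}(X)$), the maps $\mb{E}(V) \,\to\, \mb{E}(W)$ are $T$--equivariant, so at the fiber $x_0$ they are $T$--equivariant and in particular $T_{\sigma}$--equivariant. Consequently the $T_{\sigma}$--actions assemble into a $T_{\sigma}$--action on $\varinjlim E_{\sharp}(V)$, which provides the required decomposition, and the identities in \eqref{ca2} pass to the limit term by term.

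Morphisms $\overline{\phi}\,:\, \overline{U} \,\to\, \overline{V}$ are extended exactly as in Lemma \ref{cext}: for $u \,\in\, U \subset \overline{U}$ finite dimensional, factor $\overline{\phi}|_U$ through its finite dimensional image $V \,=\, \overline{\phi}(U)$ via some $\psi\,:\,U \,\to\, V$, apply $\mb{E}_{\sharp}(\psi)$, and take the class in the direct limit; well-definedness is routine. Preservation of direct limits holds by construction. Preservation of tensor products reduces to the finite dimensional case (which holds since $\mb{E}_{\sharp} = \mb{F} \circ \mb{E}$ and $\mb{F}$ preserves tensor products by Lemma \ref{Ftensor}), together with the fact that tensor product of locally finite $G$--modules is the direct limit of tensor products of finite dimensional submodules and direct limits commute with tensor products. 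Uniqueness is automatic: any direct--limit--preserving extension agrees with $\mb{E}_{\sharp}$ on finite dimensional submodules, and these cover every vector of $\overline{V}$.

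Finally, the extension $\overline{E}_{\sharp}\,:\, G\text{--}\overline{\text{mod}} \,\longrightarrow\, \overline{\mf{Vec}}$ is obtained by composing $\mb{\overline{E}_{\sharp}}$ with the forgetful functor $\mb{O}$; it inherits preservation of direct limits (since $\mb{O}$ preserves them) and of tensor products. The main obstacle, as indicated above, is ensuring that the compatible decompositions corresponding to the various $T_{\sigma}$--actions are coherent across the directed system --- this is precisely where $T$--equivariance of $\mb{E}$, rather than merely its values in $\mf{Vec}(X)$, is essential.
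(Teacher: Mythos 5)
Your construction of the underlying filtered vector space as a direct limit, the extension to morphisms, the preservation of direct limits and tensor products, and the uniqueness argument all agree with the paper. The gap is precisely in the step you yourself flag as the main delicate point: producing a compatible decomposition on the limit. You argue that because $\mb{E}$ takes values in $\mf{Vec}^T(X)$, the maps $\mb{E}(V)\to\mb{E}(W)$ are ``$T$--equivariant at the fiber $x_0$,'' so the $T_\sigma$--actions assemble on $\varinjlim E_{\sharp}(V)$. But $x_0$ lies in the open orbit $O$ and is not fixed by $T$, so the fiber $\mb{E}(V)(x_0)$ carries no canonical $T$-- or $T_\sigma$--action. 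The decomposition \eqref{dec2} of that fiber is manufactured from a choice of semi-equivariant framing over $X_\sigma$; it is not unique, and it is not functorial: a $T$--equivariant morphism of bundles over $X_\sigma$ sends a weight--$u$ semi-equivariant section to a combination of weight--$v$ semi-equivariant sections with $u-v\in\sigma^\vee$, so at $x_0$ it respects the filtrations $F^{\rho}(i)$ but in general mixes the summands $F^{\sigma}_{[u]}$. Compatibility in Definition \ref{csfvs} is a \emph{property} (existence of some decomposition), not structure preserved by morphisms of $\mf{Cvec}(\Sigma)$, so the decompositions on the various $E_{\sharp}(V)$ need not cohere across the directed system, and your assembly step does not go through as stated.

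The paper avoids this by invoking the reductivity of $G$, which your argument never uses: write the isotypical decomposition $\overline{V}=\bigoplus_{i\in I} V_i\otimes \Hom_G(V_i,\overline{V})$ with each $V_i$ finite dimensional; then $\mb{\overline{E}_{\sharp}}(\Hom_G(V_i,\overline{V}))$ has trivial filtration (trivial $G$--action together with F3(c)) and is assigned the trivial compatible decomposition, each $\mb{E_{\sharp}}(V_i)$ is compatible because it already lies in $\mf{Cvec}(\Sigma)$, and compatibility passes through tensor products and direct sums. To repair your proof you would either need to reproduce this reduction, or else show that semi-equivariant framings can be chosen coherently along the entire directed system of finite dimensional submodules --- a nontrivial additional argument that your appeal to Remark \ref{remact} does not supply.
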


\begin{proof} 
 It is easily observed that the category of $\Sigma$--filtered vector spaces over $X$ admits direct limits.
For any $\overline{V}$ in $ G$--$\overline{\text{mod}}$, define 
 $\mb{\overline{E}_{\sharp}}(\overline{V})$ (respectively, ${\overline{E}_{\sharp}}(\overline{V})$)
to be the direct limit of $\mb{{E}_{\sharp}}({V})$ (respectively, ${{E}_{\sharp}}({V})$ ) as 
$V$ varies over all finite dimensional $G$--submodules of $\overline{V}$. Note that
direct limit commutes with tensor product. So it follows from
Lemma \ref{Ftensor} that $\mb{\overline{E}_{\sharp}}$ preserves tensor
products.

To understand the compatibility condition, consider the isotypical decomposition 
$$\overline{V} \,=\, \bigoplus_{i \in I} V_i \otimes Hom_G(V_i,\, \overline{V})$$ 
obtained by linear reductivity of $G$. 
Here $I$ denotes the set of isomorphism classes of irreducible $G$--submodules of $\overline{V}$. 
Each of the $V_i$'s is finite dimensional. Since $G$ acts trivially on the module $Hom_G(V_i,\, \overline{V}) $, 
it follows that $\mb{\overline{E}_{\sharp}} ( Hom_G(V_i,\, \overline{V}))$ has trivial $\Sigma$--filtration.
Therefore it may be assigned the trivial compatible decomposition comprising the subspaces
$$ ({\overline{E}_{\sharp}} ( Hom_G(V_i,\, \overline{V})))^{\sigma}_{[u]} \,=\, \left\{ 
 \begin{array}{ ll} {\overline{E}_{\sharp}} ( Hom_G(V_i,\, \overline{V})) & {\rm if} \, [u] \,=\, 0 \,, \\
 0 & {\rm otherwise} \,. \end{array} \right. $$
Now a choice of compatible decomposition for each $\mb{{E}}_{\sharp}(V_i)$ determines a compatible decomposition for 
$\mb{\overline{E}}_{\sharp}(\overline{V}) $.
 
The construction of $\mb{\overline{E}_{\sharp}}(\overline{\phi})$ for a
morphism $\overline{\phi}\,:\, \overline{U} \,\longrightarrow\, \overline{V}$ is similar to Lemma \ref{cext}.
\end{proof}

\begin{lemma}\label{f3f4} The functors
 $\mb{\overline{E}_{\sharp}}$ (respectively, ${\overline{E}_{\sharp}}$) satisfy properties {\rm F2, F3} and {\rm F4}.
\end{lemma}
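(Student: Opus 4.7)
The plan is to reduce each of the properties F2, F3, F4 for $\mb{\overline{E}_{\sharp}}$ (and analogously for ${\overline{E}_{\sharp}}$) to the corresponding properties for the restriction to finite dimensional modules, exploiting the fact that the extension is built by direct limits and that direct limits commute with the relevant structure.

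Property F2 is essentially handled by Lemma \ref{cext2} itself. The functor $\mb{E}_{\sharp}=\mb{F}\circ\mb{E}$ already preserves tensor products on $G$--mod by Lemma \ref{Ftensor} combined with the fact that $\mb{E}$ satisfies F2. Since tensor product commutes with direct limits in both $G\text{--}\overline{\rm mod}$ and $\overline{\mf{Cvec}}(\Sigma)$, and the extension $\mb{\overline{E}_{\sharp}}$ is defined as a direct limit, F2 follows. For F3, each of the three sub-axioms is a commutativity of a diagram built from associators, symmetry isomorphisms, and unit morphisms. On finite dimensional modules these diagrams commute because $\mb{E}$ satisfies F3 and $\mb{F}$, as a tensor functor (equivalence of tensor categories), transports these coherence conditions. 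On locally finite modules the axioms then follow by passage to direct limits, noting that the identity object $\CC$ is already finite dimensional and is sent to itself, so nothing new has to be checked there. The forgetful functor $\mb{O}$ is strict monoidal and preserves the identity, so the same argument gives F2 and F3 for ${\overline{E}_{\sharp}}$.

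The substantive part is F4, faithfulness. I would proceed as follows. Let $\overline{\phi}_1,\overline{\phi}_2\colon \overline{U}\to\overline{V}$ be two $G$--module homomorphisms with $\mb{\overline{E}_{\sharp}}(\overline{\phi}_1)=\mb{\overline{E}_{\sharp}}(\overline{\phi}_2)$. Given $u\in\overline{U}$, choose a finite dimensional $G$--submodule $U\subset\overline{U}$ containing $u$. Since $\overline{\phi}_i(U)$ is finite dimensional, there is a finite dimensional $G$--submodule $V\subset\overline{V}$ containing $\overline{\phi}_1(U)\cup\overline{\phi}_2(U)$. Let $\psi_i\colon U\to V$ denote the induced morphisms. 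Because $\mb{E}$ is exact (F1), the inclusions $U\hookrightarrow\overline{U}$ and $V\hookrightarrow\overline{V}$ produce injective structural maps into the direct limits $\mb{\overline{E}_{\sharp}}(\overline{U})$ and $\mb{\overline{E}_{\sharp}}(\overline{V})$. Restricting the assumed equality to the image of $\mb{E}_{\sharp}(U)$ and using injectivity on the target side, I get $\mb{E}_{\sharp}(\psi_1)=\mb{E}_{\sharp}(\psi_2)$. Faithfulness of $\mb{E}_{\sharp}$, which is established just before the lemma, then forces $\psi_1=\psi_2$, so $\overline{\phi}_1(u)=\overline{\phi}_2(u)$; as $u$ was arbitrary, $\overline{\phi}_1=\overline{\phi}_2$. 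The same argument works verbatim for ${\overline{E}_{\sharp}}$ since $\mb{O}$ is faithful and preserves injections.

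The only real obstacle is making sure the injections into the direct limits are genuinely injections, which requires exactness (F1) of the underlying functor $\mb{E}$ together with the corresponding exactness of $\mb{F}$; once that is in hand, F4 becomes a routine direct-limit argument, and F2, F3 are formal consequences of the monoidal compatibility on the finite dimensional level.
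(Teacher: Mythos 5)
Your proposal is correct and follows essentially the same route as the paper: F2 and F3 are obtained by noting that direct limits commute with tensor products (and with the coherence data), and F4 is proved by the identical reduction to finite dimensional submodules $U\subset\overline{U}$ and $V\subset\overline{V}$, using injectivity of the structural maps into the direct limit to cancel $\mb{\overline{E}_{\sharp}}(i_V)$ and then invoking faithfulness of $\mb{E}_{\sharp}$. The only cosmetic difference is that you justify the injectivity of $\mb{\overline{E}_{\sharp}}(i_V)$ via exactness (F1), which is if anything a cleaner justification than the paper's appeal to faithfulness.
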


\begin{proof} Since direct limit commutes with tensor product,
$\mb{\overline{E}_{\sharp}}$ satisfies F2 and F3.

Suppose there exist morphisms $\overline{\phi}_j\,:\, \overline{U} \,\longrightarrow\, \overline{V}
$, $j=1,\,2$, such that $\mb{\overline{E}_{\sharp}}( \overline{\phi}_1 ) \,=\, 
\mb{\overline{E}_{\sharp}}( \overline{\phi}_2 ) $. To prove
$\mb{\overline{E}_{\sharp}}$ is faithful, it is enough to show that $
\overline{\phi}_1 \,=\, \overline{\phi}_2 $.
Consider any element $u \,\in\, \overline{U}$. Then there exists a finite
dimensional $G$--submodule $U$ of $\overline{U}$ such that $u \in U$.
Let $i_U\,:\, U \,\longrightarrow \,\overline{U}$ be the inclusion map. Let $\phi_j \,=\,
\overline{\phi}_j \circ i_U$. Then
\begin{equation}\label{phi12}
\mb{\overline{E}_{\sharp}}(\phi_1) \,=\,
\mb{\overline{E}_{\sharp}}(\phi_2)\,.\end{equation}

There exists a finite dimensional $G$--module $V \,\subset \,\overline{V}$
such that $\phi_j(U) \,\subset\, V$ for each $j$. Let $i_V\,:\, V \,\longrightarrow\,
\overline{V}$ be the inclusion map. Let $\psi_j\,:\, U \,\longrightarrow\, V$ be the unique
map such that ${\phi}_j \,=\, i_V \circ \psi_j$.
So, by \eqref{phi12},
\begin{equation}\label{phi122}
\mb{\overline{E}_{\sharp}}(i_V) \circ \mb{E_{\sharp}}( \psi_1) \,=\,
\mb{\overline{E}_{\sharp}}(i_V) \circ \mb{E_{\sharp}}( \psi_2) \,.
\end{equation}
Since $\mb{E_{\sharp}}$ is faithful, $\mb{\overline{E}_{\sharp}}(i_V)$
is a direct limit of inclusion maps. Therefore,
$\mb{\overline{E}_{\sharp}}(i_V)$ is injective. Hence it follows from
\eqref{phi122} that
\begin{equation}\label{phi123}
\mb{E_{\sharp}}(\psi_1) \,=\,
 \mb{E_{\sharp}}( \psi_2) \,.
\end{equation}

 Hence, by faithfulness of $\mb{E_{\sharp}}$ and \eqref{phi123},
we have $ \psi_1 \,=\, \psi_2 $. It follows that $\phi_1=
\phi_2$, and hence $\overline{\phi}_1(u)\,=\, \overline{\phi}_2(u)$. Since $u $
is arbitrary, $\overline{\phi}_1\,=\, \overline{\phi}_2$.

The proof for ${\overline{E}_{\sharp}}$ is similar. \end{proof}

\begin{lemma}\label{Tsch} $\mb{\overline{E}_{\sharp}}$ (respectively, ${\overline{E}_{\sharp}}$) defines a functor from
 affine $G$--schemes to
 $\Sigma$--filtered algebras (respectively, K-algebras). We denote this functor by $\mb{\overline{E}_{\sharp}}$ 
(respectively, ${\overline{E}_{\sharp}}$) as well.
\end{lemma}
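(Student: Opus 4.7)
The plan is to mimic the Nori-style argument used in Lemma \ref{cext}, but now with the $\Sigma$-filtered structure added on top. An affine $G$-scheme $Y$ corresponds to a commutative, associative, locally finite $G$-algebra $A=K[Y]$ with multiplication $m\,:\,A\otimes A\,\longrightarrow\, A$, commutativity map $\tau\,:\, A\otimes A\,\longrightarrow\,A\otimes A$ and unit $e\,:\, K\,\longrightarrow\, A$, all of which are $G$-module homomorphisms. I would begin by defining
$$\mb{\overline{E}_{\sharp}}(Y)\,:=\,\mb{\overline{E}_{\sharp}}(A)\, ,$$
and using the tensor-product compatibility from Lemma \ref{cext2} (which identifies $\mb{\overline{E}_{\sharp}}(A\otimes A)$ with $\mb{\overline{E}_{\sharp}}(A)\otimes\mb{\overline{E}_{\sharp}}(A)$) to endow it with the multiplication
$$\mb{\overline{E}_{\sharp}}(m)\,:\, \mb{\overline{E}_{\sharp}}(A)\otimes\mb{\overline{E}_{\sharp}}(A)\,\longrightarrow\, \mb{\overline{E}_{\sharp}}(A)\, .$$
Commutativity and associativity of this multiplication follow immediately from F3 applied to the corresponding identities satisfied by $m$ and $\tau$ in the category of $G$-modules.

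The only new point relative to Lemma \ref{cext} is verifying that $\mb{\overline{E}_{\sharp}}(m)$ satisfies the multiplicative filtration condition of Definition \ref{sfa}. For this I use the fact that $\mb{\overline{E}_{\sharp}}(m)$, being a morphism in $\overline{\mf{Cvec}}(\Sigma)$, respects the $\Sigma$-filtration on both sides. By the tensor product filtration formula \eqref{ten}, the target filtration at level $j$ on $\mb{\overline{E}_{\sharp}}(A)\otimes\mb{\overline{E}_{\sharp}}(A)$ is
$$\bigl(\mb{\overline{E}_{\sharp}}(A)\otimes\mb{\overline{E}_{\sharp}}(A)\bigr)^{\rho}_{\dagger}(j)\,=\,\sum_{p+q=j} \overline{E}_{\sharp}(A)^{\rho}(p)\otimes \overline{E}_{\sharp}(A)^{\rho}(q)\, ,$$
and its image under $\mb{\overline{E}_{\sharp}}(m)$ lands in $\overline{E}_{\sharp}(A)^{\rho}(j)$, which is exactly what Definition \ref{sfa} requires. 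Functoriality in $Y$ is automatic: a morphism $f\,:\,Y\,\longrightarrow\, Z$ of affine $G$-schemes corresponds to a $G$-algebra homomorphism $f^{\sharp}\,:\, K[Z]\,\longrightarrow\, K[Y]$, and $\mb{\overline{E}_{\sharp}}(f^{\sharp})$ is a morphism of the underlying $\Sigma$-filtered vector spaces; compatibility with multiplications is a commutative-diagram chase obtained by applying $\mb{\overline{E}_{\sharp}}$ to the corresponding diagram for $f^{\sharp}$, and then invoking the tensor product identification from Lemma \ref{cext2} once more.

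The proof for ${\overline{E}_{\sharp}}$ is then obtained by composing with the forgetful functor $\mb{O}\,:\, \overline{\mf{Cvec}}(\Sigma)\,\longrightarrow\,\overline{\mf{Vec}}$; since $\mb{O}$ preserves tensor products, the $K$-algebra structure on ${\overline{E}_{\sharp}}(A)$ and the functoriality in $Y$ follow at once from the arguments above, with no further filtration check needed. I do not anticipate a serious obstacle: the entire argument is a straightforward transfer of structure through the tensor-preserving functor $\mb{\overline{E}_{\sharp}}$, and the only conceptual point worth stressing is that the multiplicative compatibility of the filtration is built into the fact that $\mb{\overline{E}_{\sharp}}(m)$ is a morphism in $\overline{\mf{Cvec}}(\Sigma)$ combined with the prescribed filtration on the tensor product given by \eqref{ten}.
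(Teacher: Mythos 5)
Your proposal is correct and follows essentially the same route as the paper: transfer the multiplication $m$ through the tensor-preserving extension $\mb{\overline{E}_{\sharp}}$, get commutativity and associativity from F3, and deduce the multiplicative filtration condition of Definition \ref{sfa} from the identification $\mb{\overline{E}_{\sharp}}(A\otimes A)\cong\mb{\overline{E}_{\sharp}}(A)\otimes\mb{\overline{E}_{\sharp}}(A)$ of Lemma \ref{cext2} together with the fact that $\mb{\overline{E}_{\sharp}}(m)$ respects the tensor-product filtration \eqref{ten}. The paper's proof is the same argument, merely spelling out the filtration step first on finite-dimensional submodules $V,W\subset A$ and then passing to the direct limit.
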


\begin{proof} Suppose $A$ is a finitely generated $K$--algebra on which $G$ acts. Following Nori, we view $A$ as a
locally finite $G$--module, and its multiplication as a
morphism $$m_A\,:\, A \otimes A \,\longrightarrow\, A$$ in $G\text{--}\overline{\rm mod}$. 

Note that $ {\overline{E}_{\sharp}}( A \otimes A)\,=\, {\overline{E}_{\sharp}}(A) \otimes {\overline{E}_{\sharp}}(A)$. 
Therefore we have a morphism 
$$
{\overline{E}_{\sharp}}(m_A)\,:\, {\overline{E}_{\sharp}}(A) \otimes {\overline{E}_{\sharp}}(A) 
\,\longrightarrow \,{\overline{E}_{\sharp}}(A) \, .
$$
Since $m_{A}$ is nontrivial and $ {\overline{E}_{\sharp}}$ is faithful, $ {\overline{E}_{\sharp}}(m_A)$ defines a
nontrivial multiplication on ${\overline{E}_{\sharp}}(A)$. This is commutative and associative, since $ {\overline{E}_{\sharp}}$ satisfies property F3.

 For any two finite dimensional $G$--submodules $V$ and $W$ of $A$, by \eqref{ten} we have
$${{E}_{\sharp}}(V)^{\rho }(i) \otimes {{E}_{\sharp}}(W)^{\rho}(j)
\,\subseteq\, ({{E}_{\sharp}}(V) \otimes {E}_{\sharp}(W))_{\dagger}^{\rho} (i+j) \,.$$

Since $\mb{{E}_{\sharp}}( V \otimes W) \,=\, \mb{{E}_{\sharp}}(V) \otimes \mb{{E}_{\sharp}}(W) $,
$$ ({{E}_{\sharp}}(V) \otimes {E}_{\sharp}(W))_{\dagger}^{\rho} (i+j) \,=\, ({{E}_{\sharp}}(V \otimes W))^{\rho} (i+j) \,.$$ 

Then applying Lemma \ref{cext2}, we have 
\begin{equation}\label{hit1}
{\overline{E}_{\sharp}}(A)^{\rho }(i) \otimes {\overline{E}_{\sharp}}(A)^{\rho}(j)
\,\subseteq\, ({\overline{E}_{\sharp}}(A \otimes A))^{\rho} (i+j) \,.\end{equation}

As $\mb{\overline{E}_{\sharp}}(m_A)$
is a morphism of $\Sigma$--filtered vector spaces, we have
\begin{equation}\label{hit2}
\mb{\overline{E}_{\sharp}}(m_A) ({\overline{E}_{\sharp}}(A \otimes A))^{\rho} (i+j)
 \,\subseteq\, {\overline{E}_{\sharp}}(A)^{\rho}(i+j) \,. \end{equation}
By \eqref{hit1} and \eqref{hit2},
$$ \mb{\overline{E}_{\sharp}}(m_A) ({\overline{E}_{\sharp}}(A)^{\rho }(i) \otimes {\overline{E}_{\sharp}}
(A)^{\rho}(j))\, \subseteq\, {\overline{E}_{\sharp}}(A)^{\rho}(i+j) \,.$$
Thus $\mb{\overline{E}_{\sharp}}(A)$ is a $\Sigma$--filtered algebra.

Now suppose that $\phi\,:\, A \,\longrightarrow\, B$ is a morphism of $G$--algebras, so that
$$ \phi \circ m_A \,=\, m_B \circ (\phi \otimes \phi).$$
By functoriality,
$$ \mb{\overline{E}_{\sharp}}( \phi) \circ \mb{\overline{E}_{\sharp}}( m_A)\, = \,\mb{\overline{E}_{\sharp}}( m_B) \circ 
(\mb{\overline{E}_{\sharp}}(\phi) \otimes \mb{\overline{E}_{\sharp}}( \phi))\,.$$ 
Thus $ \mb{\overline{E}_{\sharp}}(\phi)$ is a morphism of $\Sigma$--filtered algebras.
\end{proof}

\begin{lemma}\label{fiber}
The algebra ${\overline{E}}_{\sharp}(K[G])$ is the
algebra of regular functions of the fiber ${\overline{E}}(K[G])(x_0) $ of the principal bundle ${\overline{E}} (K[G])$.
Moreover, ${\overline{E}}_{\sharp}(K[G])$ 
admits a $G$--action which makes it equivariantly isomorphic to $K[G]$.
\end{lemma}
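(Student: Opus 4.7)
I would prove both assertions by identifying $\overline{E}_{\sharp}(K[G])$ with the coordinate ring of the fiber of the principal $G$--bundle $\overline{E}(K[G])$ at $x_0$, and then deducing the equivariant isomorphism with $K[G]$ from the fact that the fiber of a principal $G$--bundle at any closed point is a $G$--torsor.

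First, recall that the sheaf $\mb{\overline{E}}(K[G])$ was built by Nori (see Lemma \ref{cext}) as the direct limit in $\mf{Qco}^T(X)$ of the locally free sheaves $\mb{E}(V)$, where $V$ ranges over all finite dimensional $G$--submodules of $K[G]$. Taking the fiber at the closed point $x_0$---that is, tensoring the stalk over $\mc{O}_{X,x_0}$ with the residue field $K$---is a right exact operation that commutes with arbitrary direct limits. Hence
\begin{equation*}
\mb{\overline{E}}(K[G])(x_0) \,=\, \varinjlim_V \mb{E}(V)(x_0) \,=\, \varinjlim_V E_{\sharp}(V) \,=\, \overline{E}_{\sharp}(K[G]),
\end{equation*}
and by Lemma \ref{Tsch} this identification is one of $K$--algebras, not merely of vector spaces. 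Since the affine fiber $\overline{E}(K[G])(x_0)$ has $\mb{\overline{E}}(K[G])(x_0)$ as its coordinate ring, this settles the first assertion.

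Next, I would transfer the $G$--action. The right $G$--action on $\overline{E}(K[G])$ constructed via $\mb{\overline{E}}(a)$ in \eqref{trivac2} restricts to a right $G$--action on the fiber $\overline{E}(K[G])(x_0)$, and hence induces a $G$--action on its coordinate ring $\overline{E}_{\sharp}(K[G])$ by pullback of functions. Under the identification of the previous paragraph this should coincide, by naturality, with the action produced by applying $\mb{\overline{E}_{\sharp}}$ to the coalgebra structure map on $K[G]$ dual to $a$. Because $\overline{E}(K[G])$ is a principal $G$--bundle, its fiber over $x_0$ is a $G$--torsor, and any $G$--torsor over $\mathrm{Spec}\,K$ is (non-canonically, via a choice of base point) $G$--equivariantly isomorphic to $G$ with its right translation action. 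Dualising, the coordinate algebra of this fiber is $G$--equivariantly isomorphic to $K[G]$ with its standard $G$--action, which completes the argument.

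The main obstacle I anticipate is verifying rigorously that the $G$--action coming from the principal bundle structure on $\overline{E}(K[G])$ matches, fibrewise at $x_0$, the action induced functorially by $\mb{\overline{E}_{\sharp}}$ applied to $a$. This is essentially a naturality check for $\mb{\overline{E}}(a)$ with respect to fiber evaluation, and reduces to the compatibility of fiber evaluation with direct limits exploited in the first step.
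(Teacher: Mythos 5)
Your proposal is correct and follows essentially the same route as the paper: the paper identifies $\overline{E}_{\sharp}(K[G])$ with $\mb{\overline{E}}(K[G])\otimes_{\mc{O}_X}K$ via the uniqueness of the extensions to $G\text{--}\overline{\rm mod}$ (which is exactly your observation that fiber evaluation commutes with direct limits), and then obtains the $G$--action from the free and transitive action of $G$ on the fiber of the principal bundle. Your explicit remark that a $G$--torsor over $\mathrm{Spec}\,K$ trivializes after a choice of base point merely spells out a step the paper leaves implicit.
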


\begin{proof}
Let $\mb{\overline{O}}$ be the forgetful functor that takes a $\Sigma$--filtered algebra to its underlying $K$--algebra. 
Then
\begin{equation}\label{ua}
{\overline{E}_{\sharp}}\,=\, \mb{\overline{O}} \circ \mb{\overline{E}}_{\sharp}\,. \end{equation} 
Indeed, this follows from ${{E}_{\sharp}} \,=\,\mb{{O}} \circ \mb{{E}}_{\sharp}$ combined with Lemma \ref{f3f4}
and the uniqueness of the extensions of ${E}_{\sharp}$ and $ \mb{{E}}_{\sharp}$ to $G\text{--}\overline{\rm mod}$.

Let $q \,:\, \mc{O}_{X} \,\longrightarrow\, K$ be the evaluation map corresponding to the closed point $x_0 \in X$. 
Using $q$, to any $\mc{O}_{X}$--module
 $M$ we may associate a $K$--vector space $M \otimes_{\mc{O}_{X}} K$. Now recall that, for any $V 
\,\in\, G$--mod,
 $${E_{\sharp}}(V) \,=\, \mb{E}(V) \otimes_{\mc{O}_X} K \,.$$

 Then by the uniqueness of extensions we have
$${\overline{E}_{\sharp}}(K[G]) \,= \,\mb{\overline{E}}(K[G]) \otimes_{\mc{O}_X} K \,.$$
It is then clear that $ {\overline{E}_{\sharp}}(K[G])$ is the algebra of regular functions of the fiber at $x_0$ of
the principal bundle ${\overline{E}}(K[G])$. Note that, by \eqref{ua}, this $\overline{E_{\sharp}}(K[G])$ is the underlying
 algebra of $\mb{\overline{E}}_{\sharp}(K[G])$. This completes the proof of the first part of the lemma.

Next note that there is a natural $G$--action on the principal bundle ${\overline{E}}(K[G])$ which
is free and transitive on each fiber. 
This yields the required $G$--action on ${\overline{E}}_{\sharp}(K[G])$ by the first part of the lemma. 
\end{proof}

\begin{lemma}\label{compalg} Let $X$ be a toric variety.
Then the $\Sigma$--filtered algebra $\mb{\overline{E}}_{\sharp}(K[G])$ is compatible.
\end{lemma}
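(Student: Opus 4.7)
By Remark \ref{remcomp}, it suffices, for each maximal cone $\sigma \in \Sigma$, to exhibit a decomposition of $\overline{E}_{\sharp}(K[G])$ indexed by $M_{\sigma}$ satisfying \eqref{ca2} and the multiplicative compatibility \eqref{ca3}. The plan is to produce this decomposition from the equivariant triviality of $E_G := \overline{E}(K[G])$ over the affine piece $X_{\sigma}$, which is the input provided by \cite{BDP2} and which forces the hypothesis $K = \mathbb{C}$.

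First I would fix a maximal cone $\sigma$ and invoke \cite{BDP2} to produce a $T$--equivariant trivialization $E_G|_{X_{\sigma}} \cong X_{\sigma} \times G$, where $T$ acts on $G$ through some homomorphism $T_{\sigma} \to \Aut(G)$ that factors through an action commuting with right multiplication by $G$. Restricting to the distinguished point $x_0 \in O \subset X_{\sigma}$, this yields a $T_{\sigma}$--action on the fiber $E_G(x_0)$ that commutes with the $G$--action. By Lemma \ref{fiber}, this fiber is identified, as a $G$--algebra, with $\overline{E}_{\sharp}(K[G])$. Hence $\overline{E}_{\sharp}(K[G])$ carries a $T_{\sigma}$--action commuting with the $G$--action, and by Remark \ref{remact} this action produces a decomposition
\[
\overline{E}_{\sharp}(K[G]) \;=\; \bigoplus_{[u] \in M_{\sigma}} \overline{E}_{\sharp}(K[G])^{\sigma}_{[u]}.
\]

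Next I would check \eqref{ca2}. For any finite dimensional $G$--submodule $V \subset K[G]$, the equivariant trivialization of $E_G|_{X_{\sigma}}$ induces an equivariant trivialization of the associated vector bundle $E_G \times_G V$ restricted to $X_{\sigma}$, hence a semi-equivariant frame whose weights are determined by the $T_{\sigma}$--action on the fiber at $x_0$. Therefore Klyachko's filtration on $\mathbf{E}_{\sharp}(V) = \mathbf{F}(E_G \times_G V)$ coincides with the filtration induced by this $T_{\sigma}$--action, which gives \eqref{ca2} on each finite dimensional piece. Passing to the direct limit over all finite dimensional $G$--submodules of $K[G]$ (Lemma \ref{cext2}) transports both the filtration and the decomposition coherently, yielding \eqref{ca2} on all of $\overline{E}_{\sharp}(K[G])$. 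The multiplicative compatibility \eqref{ca3} then follows because the $T_{\sigma}$--action on $\overline{E}_{\sharp}(K[G])$, being induced by an equivariant trivialization of a principal $G$--bundle, acts by algebra automorphisms: this is inherited from the fact that the multiplication $m : K[G] \otimes K[G] \to K[G]$ is $G \times T$--equivariant in the trivialized picture, and the extension functor $\mathbf{\overline{E}}_{\sharp}$ respects this by Lemma \ref{Tsch}.

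The main obstacle I anticipate is bookkeeping the compatibility between the $T_{\sigma}$--action obtained from the equivariant trivialization of the principal bundle and Klyachko's intrinsic filtration construction from the beginning of \S3 for every $V \in G$--mod simultaneously. The trivialization of $E_G|_{X_{\sigma}}$ is not canonical, but the resulting filtrations $F^{\rho}(i)$ are; one must verify that the induced semi-equivariant frames on the various $E_G \times_G V$ arise consistently from a single $T_{\sigma}$--action on $\overline{E}_{\sharp}(K[G])$, so that a single decomposition of the algebra simultaneously implements Klyachko's filtrations on all associated bundles. Once this coherence is established, the algebra property of the $T_{\sigma}$--action is automatic and \eqref{ca3} follows without further work.
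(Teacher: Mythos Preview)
Your proposal is correct and follows essentially the same route as the paper. Both arguments reduce to a maximal cone via Remark~\ref{remcomp}, invoke the equivariant trivialization from \cite{BDP2} to obtain a homomorphism $\rho_s\colon T\to G$ giving a single $T$--action on $K[G]$ (hence on the fiber $\overline{E}_\sharp(K[G])$) that simultaneously produces semi-equivariant frames for all associated bundles, and deduce the multiplicative compatibility \eqref{ca3} from the fact that this $T$--action is by algebra automorphisms; the paper phrases this last step concretely as ``products of $T$--eigenvectors in $K[G]$ are $T$--eigenvectors with product weight'', which dissolves the coherence obstacle you anticipated.
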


\begin{proof} By Remark \ref{remcomp}, it is enough to concentrate on a maximal cone 
$\sigma$.
It follows from Theorem \ref{etriv} (cf. \cite{BDP2}) that $E_G$ admits a section $s$ over 
$X_{\sigma}$, such that 
\begin{itemize}
\item $t s(x) \,=\, s(t x) \rho_s(t)$ for every $x \,\in\, X_{\sigma}$ and $t \,\in\, T,$ where
$\rho_{s} \,:\, T\,\longrightarrow\, G$ is a group homomorphism. 
\end{itemize}

Then for any locally finite $G$--module $V$, the homomorphism $\rho_s$ and the given action of $G$ induce a $T$--action on $V$
which we denote by $\rho_s$ again without confusion. An eigenvector $v$ of this action with weight $\chi(t)$ gives 
rise to a semi-invariant section $[(s(x),v)]$ of $\overline{\mb{E}}(V)$ with the same weight. Such sections, corresponding to a choice of an
eigen-basis of $V$, induce a compatible $T$--decomposition of $\mb{\overline{E}}_{\sharp}(V)$. Moreover, this decomposition does not
depend on the choice of the eigen-basis.

Now consider the $G$--module $K[G]$. The action of $T$ on $K[G]$ induced by $\rho_s$ satisfies the condition
that
$$ \rho_s(t) f(\cdot) \,=\, f( \cdot \, \rho_s(t))\,. $$
 It follows that if $f_1,\, f_2 \,\in\, K[G]$ are T-eigenvectors with weights $\chi_1(t)$ and $\chi_2(t)$ respectively, then 
the product $f_1 f_2$ is a T-eigenvector with weight $\chi_1(t) \chi_2(t)$. This implies that the compatible $T$--decomposition on 
on $\mb{\overline{E}}_{\sharp}(K[G])$ respects the multiplication of the algebra $K[G]$ (cf. \eqref{ca3}).
\end{proof}

\begin{lemma}\label{commute} For every $n$--dimensional cone $\sigma$, an action of $T$ on ${\overline{E}_{\sharp}}(K[G])$
 which is compatible with the $\Sigma$--filtration, commutes with the action
of $G$.
\end{lemma}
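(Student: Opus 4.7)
My plan is to use the equivariant trivialization of $E_G$ over $X_\sigma$ from \cite{BDP2} to identify $\overline{E_{\sharp}}(K[G])$ with $K[G]$ as $G$-algebras and then reduce the lemma to the elementary commutativity of left and right translations on $K[G]$.

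Fix an equivariant section $s\,:\, X_\sigma \to E_G$ satisfying $t s(x) = s(tx) \rho_s(t)$, with associated homomorphism $\rho_s\,:\, T \to G$. Evaluation at $x_0$ produces a $G$-algebra isomorphism $\overline{E_{\sharp}}(K[G]) \cong K[G]$. Under this identification, by Lemma \ref{fiber} the $G$-action becomes one of the translation actions of $G$ on $K[G]$, while the specific compatible $T$-action $\tau_s$ of Lemma \ref{compalg}, defined by acting on each $G$-module through $\rho_s(t) \in G$ via the $G$-module structure, becomes the complementary translation by $\rho_s(t)$. Since left and right translations on any group commute, $\tau_s$ and the $G$-action commute. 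This settles the lemma for the particular compatible $T$-action $\tau_s$.

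For a general compatible $T$-action $\tau$, I would argue that $\tau$ equals $\tau_{s'}$ for some equivariant section $s'$ and thereby reduce to the previous case. The compatible decomposition determined by $\tau$ gives, through Klyachko's correspondence, a family of semi-equivariant frames of $\overline{E}(V)\vert_{X_\sigma}$ for each $V \in G$-mod. The multiplicative compatibility \eqref{ca3}, combined with the fact that $\overline{E_{\sharp}}(K[G])$ is equivariantly isomorphic to $K[G]$ as a $G$-algebra, forces these frames for $V = K[G]$ to assemble into a single equivariant trivialization $s'$ of $E_G$ over $X_\sigma$. The first part of the argument applied to $s'$ then yields commutativity of $\tau$ with $G$.

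The main obstacle is this last step, namely showing that every compatible $T$-action on $\overline{E_{\sharp}}(K[G])$ arises from some equivariant trivialization of $E_G$ over $X_\sigma$. This rests on combining the equivariant trivializability theorem of \cite{BDP2} with the rigidity imposed by requiring the $T$-action to respect both the filtration structure and the multiplicative structure on the algebra $K[G]$.
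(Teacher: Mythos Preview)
Your first paragraph is correct but only establishes the existential version of the lemma: the particular compatible action $\tau_s$ built from the equivariant section of \cite{BDP2} commutes with $G$, essentially by the argument already implicit in Lemma~\ref{compalg}. The lemma, however, asserts that \emph{every} $T$--action compatible with the $\Sigma$--filtration commutes with $G$, and your reduction of an arbitrary such $\tau$ to some $\tau_{s'}$ is a genuine gap. Observe that ``$\tau$ arises from an equivariant section $s'$'' is equivalent to ``$\tau$ acts by one-sided translations on $K[G]$ under the identification $\overline{E}_{\sharp}(K[G])\cong K[G]$'', which in turn is equivalent to ``$\tau$ commutes with the $G$--action''. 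So your proposed reduction is circular. Invoking the multiplicative condition \eqref{ca3} does not rescue this: \eqref{ca3} only says that each $\tau_t$ is an \emph{algebra} automorphism of $K[G]$, i.e., a variety automorphism of $G$, and variety automorphisms of a reductive group are in general far from being one-sided translations. There is no evident mechanism by which \eqref{ca2} and \eqref{ca3} alone force $\tau$ into the form $\tau_{s'}$.

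The paper's proof takes an entirely different route and never attempts to realize $\tau$ via a section. The key observation is that the $G$--coaction
\[
\mb{\overline{E}_{\sharp}}(a^*)\,:\, \mb{\overline{E}_{\sharp}}(K[G]) \,\longrightarrow\, \mb{\overline{E}_{\sharp}}(K[G]) \otimes K[G']
\]
is, by pure functoriality, a morphism of $\Sigma$--filtered objects; combined with the triviality of the filtration on $K[G']$ this gives $\mb{\overline{E}_{\sharp}}(a^*)\bigl(\mb{\overline{E}_{\sharp}}(K[G])^{\rho}(i)\bigr) \subseteq \mb{\overline{E}_{\sharp}}(K[G])^{\rho}(i)\otimes K[G']$ for every ray $\rho$. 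Now fix \emph{any} compatible decomposition $\bigoplus_{\chi} \mb{\overline{E}_{\sharp}}(K[G])^{\sigma}_{\chi}$. If $f$ lies in the $\chi$--piece and $\mb{\overline{E}_{\sharp}}(a^*)(f)$ had a nonzero component in weight $\chi_{j_0}\neq\chi$, then filtration preservation forces $\chi_{j_0}(\rho)\geq\chi(\rho)$ for all $\rho\in\sigma\cap|\Sigma(1)|$, and since $\sigma$ is $n$--dimensional these rays span $N\otimes\RR$, so strict inequality holds at some $\rho_0$. Applying the inverse $G$--translation (which is equally filtration-preserving) then produces a contradiction with the invertibility of the action. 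This argument uses only \eqref{ca2}, not \eqref{ca3}, and works uniformly for every compatible decomposition, which is exactly what your approach cannot deliver.
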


\begin{proof}
We revisit the $G$--action on ${\overline{E}_{\sharp}} (K[G])$. Recall the multiplication map $a
\,:\, G \times G' \,\longrightarrow\, G$ of $G$ defined in \eqref{trivac1}.
Let $ a^* \,:\, K[G] \,\longrightarrow\, K[G] \otimes K[G']$ be
the algebra morphism corresponding to $a$. 
Then we have a map
\begin{equation}\label{eqgact2}
\mb{\overline{E}_{\sharp}}(a^*) \,:\, \mb{\overline{E}_{\sharp}}(K[G])\,\longrightarrow\,\mb{\overline{E}_{\sharp}}(K[G])
\otimes\mb{\overline{E}_{\sharp}}(K[G'])\, . \end{equation}
of $\Sigma$--filtered algebras. Note that the underlying algebra ${\overline{E}_{\sharp}} (K[G']) $ of $ \mb{\overline{E}_{\sharp}} (K[G'])$ is $ K[G']$. 
This follows from the fact observed in Section
\ref{NC} that ${\overline{E}}(K[G']) \,=\, X \times G'$, and the first part of Lemma \ref{fiber}. Thus the map $ \mb{\overline{E}_{\sharp}}(a^*)$ induces an action of $G$ on 
 ${\rm Spec} ({\overline{E}_{\sharp}} (K[G]) ) \,=\, {\overline{E}} (K[G]) (x_0) $, which agrees with the action of $G$ given by
$\overline{E}(a)$ (cf. \eqref{trivac2}).
 
It follows from property F3(c) that the bundle ${\overline{E}}(K[G'])$ has trivial $T$--action
 for any $\mb{E} \,\in\, \mf{Nor}^T(X)$.
Therefore, the $\Sigma$--filtration on $\mb{\overline{E}_{\sharp}}(K[G'])$ satisfies
\begin{equation}\label{trivfil}
\mb{\overline{E}_{\sharp}}(K[G'])^{\rho}(i)\,= \,\left\{ \begin{array}{ll} K[G'] &
 \text{if}\, i \le 0 \\
0 & \text{if} \, i> 0 \\
\end{array} \right. \end{equation}
for every $\rho \in |\Sigma(1)|$. Since the filtrations are decreasing, it follows (cf. \eqref{ten}) that for every $\rho$,
\begin{equation}\label{kgp}
\left( \mb{\overline{E}_{\sharp}}(K[G]) \otimes \mb{\overline{E}_{\sharp}}(K[G']) \right)^{\rho} (i)
\,=\, \mb{\overline{E}_{\sharp}}(K[G])^{\rho}(i) \otimes
K[G'] \,. \end{equation}
As $\mb{\overline{E}_{\sharp}}(a^*) $ respects the $\Sigma$--filtration, we have
$$\mb{\overline{E}_{\sharp}}(a^*) (\mb{\overline{E}_{\sharp}}(K[G])^{\rho}(i) ) \subset \mb{\overline{E}_{\sharp}}(K[G])^{\rho}(i)
\otimes K[G'] \,. $$

Let $\sigma$ be an $n$--dimensional cone. Note that $T_{\sigma} \,=\,T$.
Fix a decomposition (equivalently, a compatible $T$--action)
\begin{equation}\label{eqtact}
\mb{\overline{E}_{\sharp}}(K[G]) \,=\, \bigoplus_{\chi \in M} \mb{\overline{E}_{\sharp}}(K[G])^{\sigma}_{\chi} \, , \end{equation} such that for any
$\rho \in \sigma \bigcap \Sigma(1)$,
$$\mb{\overline{E}_{\sharp}}(K[G])^{\rho}(i) \,=\, \sum\limits_{\chi(\rho)\ge i } \mb{\overline{E}_{\sharp}}(K[G])^{\sigma}_{\chi} \,.$$
We will show that
\begin{equation}\label{eqfa}
 \mb{\overline{E}_{\sharp}}(a^*) (\mb{\overline{E}_{\sharp}}(K[G])^{\sigma}_{\chi} )
\,\subseteq\, \mb{\overline{E}_{\sharp}}(K[G])^{\sigma}_{\chi}
\otimes K[G'] \end{equation} for every $\chi$.

Suppose $f \in \mb{\overline{E}_{\sharp}}(K[G])^{\sigma}_{\chi} $. 
Since $K[G] \otimes K[G']$ is locally finite, we may
write $\mb{\overline{E}_{\sharp}}(a^*) (f)$ as a finite sum
 \begin{equation}\label{eqf}
\mb{\overline{E}_{\sharp}}(a^*) (f) \,=\, \sum f_j \otimes b_j, \end{equation} where
$f_j \,\in\, \mb{\overline{E}_{\sharp}}(K[G])^{\sigma}_{\chi_j} $ and $b_j \in K[G'] $.
Note that $$ f \in \mb{\overline{E}_{\sharp}}(K[G])^{\rho} (\chi(\rho)) \; \text{for} \; \text{every} \; \rho \in \sigma \bigcap \Sigma(1) \,. $$
Since $\mb{\overline{E}_{\sharp}}(a^*)$ preserves the $\Sigma$--filtration, we must have $\chi_j(\rho) \ge \chi(\rho) $
for every $\rho \,\in\, \sigma \bigcap \Sigma(1)$.

Suppose, if possible, $\chi_{j_0} \,\neq\, \chi$ for some value $j_0$ of $j$. Then 
since $\sigma \bigcap \Sigma(1) $ spans $N \otimes \mathbb{R}$, there exists $\rho_0 
\in \sigma \bigcap \Sigma(1) $ such that $ \chi_{j_0}(\rho_0) > \chi (\rho_0)$.

Given any $h \in G'$, consider the $G$--map $$\phi_h\,:\, G \,\longrightarrow\, G 
\times G'$$ defines by $g\,\longmapsto\, (g,\, h)$. The induced map
$$\phi_h^* \,:\, K[G] \otimes K[G'] \,\longrightarrow \,K[G]$$ satisfies 
$\phi_h^*(x \otimes y) = y(h) \,x$. Identifying $K[G] $ with
$K[G] \otimes_K K$, we can write
$$\phi_h^* \,=\, {\rm id} \otimes ev_h$$ where $$ev_h: K[G'] \longrightarrow 
K$$ is the evaluation map at $h$.
Therefore, $$ \mb{\overline{E}_{\sharp}} (\phi_h^* )\, =\, {\rm id} \otimes \mb{\overline{E}_{\sharp}}(ev_h )\,. $$

Note that $\mb{E}(K) $ is the trivial line bundle with trivial $T$--action by property F3(c).
Therefore, $\mb{\overline{E}_{\sharp}} (K) \,=\, K$.
Moreover, for any two $G$--algebras $A,\, B$ with trivial $G$--action, and any homomorphism $\theta
\,:\, A \,\longrightarrow\, B$,
$\mb{\overline{E}_{\sharp}} (\theta) \,=\, \theta$. Hence, $ \mb{\overline{E}_{\sharp}} ({ev_h})
\,= \,ev_h $. Thus we have,
$$\mb{\overline{E}_{\sharp}} (\phi_h^* ) \,=\, {\rm id} \otimes {ev_h}:
\mb{\overline{E}_{\sharp}}(K[G]) \otimes K[G'] \,\longrightarrow\, \mb{\overline{E}_{\sharp}}(K[G]) \,. $$
Hence, $$ \mb{\overline{E}_{\sharp}} (\phi_h^* ) \, (\sum f_j \otimes b_j)
\,=\, \sum b_j(h)f_j \,.$$
Then using \eqref{eqf} we have
 $$ \mb{\overline{E}_{\sharp}} (\phi_h^* \circ a^* ) (f) \,=\, \sum b_j(h)f_j \,.$$
 Since $b_{j_0} \neq 0$, there exists $h_0$ such that $b_{j_0}(h_0) \neq
 0$. Let $i_0 = \chi(\rho_0)$. Then $$b_{j_0}(h_0)\,f_{j_0} \in
 \mb{\overline{E}_{\sharp}}(K[G])^{\rho_0}(i_1)\; \text{where}\; i_1 \,=\, \chi_{j_0}(\rho_0)\, > \,i_0
 \,.$$
 Note that the composition of maps
 $$ (\phi_{h^{-1}_0}^* \circ a^* ) \circ (\phi_{h_0}^* \circ a^*)= \rm{id}\,. $$
 Since $ \mb{\overline{E}_{\sharp}}(\phi_{h^{-1}_0}^* \circ a^* )$ is also
 filtration preserving,
 $$ \mb{\overline{E}_{\sharp}}(\phi_{h^{-1}_0}^* \circ a^* )(b_{j_0}(h_0)\,f_{j_0}) \,\in\,
 \mb{\overline{E}_{\sharp}}(K[G])^{\rho_0}(i_1) \,.$$
 Therefore,$$ \mb{\overline{E}_{\sharp}}(\phi_{h^{-1}_0}^* \circ a^* )(b_{j_0}(h_0)\,f_{j_0})
 \notin \mb{\overline{E}_{\sharp}}(K[G])^{\sigma}_{\chi} \, $$
 unless $ \mb{\overline{E}_{\sharp}}(\phi_{h^{-1}_0}^* \circ a^*
 )(b_{j_0}(h_0)\,f_{j_0})\,=\, 0$.
 But $$ \mb{\overline{E}_{\sharp}}(\phi_{h^{-1}_0}^* \circ a^* )
 \circ \mb{\overline{E}_{\sharp}}(\phi_{h^{-1}_0}^* \circ a^* )(f)
 \,=\, f \in \mb{\overline{E}_{\sharp}}(K[G])^{\sigma}_{\chi}\,. $$
 Therefore, $$\mb{\overline{E}_{\sharp}}(\phi_{h^{-1}_0}^* \circ a^* )(b_{j_0}(h_0)\,f_{j_0})= 0 \,.$$
 This is a contradiction since $\mb{\overline{E}_{\sharp}}(\phi_{h^{-1}_0}^* \circ a^*)$
 is an isomorphism. Thus no such $j_0$ exists. Therefore, using \eqref{eqf}, we obtain \eqref{eqfa}.
 This implies that the actions of $G$ and $T$ on $\mb{\overline{E}_{\sharp}}
(K[G])$, induced by \eqref{eqgact2} and \eqref{eqtact} respectively, commute. The lemma follows.
\end{proof}

We may associate to any equivariant principal bundle $E_G$, the
compatible $\Sigma$--filtered algebra $\mb{\overline{E}_{\sharp}}
(K[G])$, where $\mb{E}\,= \,\mb{N}^{T}_0(E_G)$.

\section{Correspondence between equivariant $G$--bundles and $\Sigma$--filtered algebras}

Let $X$ be a toric variety defined over an algebraically closed field $K$ of characteristic $0$.
 Assume that every maximal cone in the fan $\Sigma$ of $X$ is of top dimension. We 
note that this assumption is always satisfied when $X$ is a complete toric variety.

\begin{defn}\label{def1}
Let $\mf{Calg}_G(\Sigma)$ be the category whose objects are compatible
$\Sigma$--filtered $K$--algebras $B$ such that
\begin{itemize}
\item $B$ admits a $G$--action with respect to which it is $G$--equivariantly isomorphic to the algebra $K[G]$,

\item For every top dimensional cone in the fan, $B$ admits a compatible
action of $T$ that commutes with the action of $G$ on $B$.
\end{itemize}

The morphisms of $\mf{Calg}_G(\Sigma)$ are $G$--equivariant isomorphisms of compatible
$\Sigma$--filtered $K$--algebras.
\end{defn}

\begin{lemma}\label{leml}
The association $\mb{E} \,\longmapsto\,
\mb{\overline{E}_{\sharp}}(K[G])$ 
induces a functor $$\mb{A}\,:\,\mf{Nor}^T(X)
\,\longrightarrow\, \mf{Calg}_G(\Sigma)\, ,$$
where $\mf{Nor}^T(X)$ and $\mf{Calg}_G(\Sigma)$ are as
in \eqref{not3} and Definition \ref{def1} respectively.
\end{lemma}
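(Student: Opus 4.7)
The proof plan is to define $\mb{A}$ on objects by $\mb{A}(\mb{E}) := \mb{\overline{E}_{\sharp}}(K[G])$ and on morphisms by the induced natural transformation at $K[G]$, then assemble the preceding lemmas to verify the output satisfies Definition \ref{def1} and that functoriality holds.

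For the action on objects, I would chain the previous lemmas. Lemma \ref{Tsch} endows $\mb{\overline{E}_{\sharp}}(K[G])$ with the structure of a $\Sigma$--filtered algebra. Lemma \ref{compalg} upgrades this to a compatible $\Sigma$--filtered algebra; the standing assumption that every maximal cone in $\Sigma$ is of top dimension, together with Remark \ref{remcomp}, ensures that the compatible decompositions exhibited at maximal cones suffice. Lemma \ref{fiber} supplies the $G$--action that makes $\mb{\overline{E}_{\sharp}}(K[G])$ $G$--equivariantly isomorphic to $K[G]$. Finally, Lemma \ref{commute} provides, for every top-dimensional cone $\sigma$, a compatible $T$--action commuting with this $G$--action. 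These are precisely the conditions listed in Definition \ref{def1}.

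For the action on morphisms, given a natural isomorphism $\eta \,:\, \mb{E}_1 \,\longrightarrow\, \mb{E}_2$ in $\mf{Nor}^T(X)$, I would post-compose with $\mb{F}$ to obtain a natural isomorphism $\mb{E}_{1\sharp} \,\longrightarrow\, \mb{E}_{2\sharp}$ of functors $G\text{--mod} \,\longrightarrow\, \mf{Cvec}(\Sigma)$, and then take direct limits over finite-dimensional $G$--submodules (exactly as in the construction of Lemma \ref{cext2}) to lift this to a natural isomorphism $\mb{\overline{E}_{1\sharp}} \,\longrightarrow\, \mb{\overline{E}_{2\sharp}}$ of the tensor-preserving extensions; uniqueness in Lemma \ref{cext2} guarantees this is the only possible lift. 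Setting $\mb{A}(\eta)$ to be its component at $K[G]$ produces an isomorphism of the underlying compatible $\Sigma$--filtered vector spaces. Naturality applied to the multiplication $m \,:\, K[G]\otimes K[G] \,\longrightarrow\, K[G]$ makes it an algebra map, while naturality applied to the coaction $a^{\ast} \,:\, K[G] \,\longrightarrow\, K[G]\otimes K[G']$ of Lemma \ref{commute} forces $G$--equivariance. Functoriality in $\eta$ (identity to identity, composition to composition) follows since each step -- composition with $\mb{F}$, the direct-limit extension, evaluation at $K[G]$ -- is itself functorial.

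The main step that needs care is the $G$--equivariance of $\mb{A}(\eta)$. It hinges on the observation (already used in the proof of Lemma \ref{commute}) that $\mb{\overline{E}_{i\sharp}}(K[G']) = K[G']$ canonically, because $G'$ carries the trivial $G$--action and hence $\mb{E}_i(K[G'])$ is the trivial bundle $X \times G'$ with trivial fiber-wise structure, independent of $i$. Consequently the natural isomorphism acts as the identity on this factor, and the naturality square for $a^{\ast}$ reduces precisely to the statement that $\mb{A}(\eta)$ intertwines the two $G$--actions defined via $\mb{\overline{E}_{i\sharp}}(a^{\ast})$. With this identification secured, the rest of the verification is formal.
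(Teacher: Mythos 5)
Your proposal is correct and follows essentially the same route as the paper: the object-level verification is exactly the chain of Lemmas \ref{Tsch}, \ref{compalg}, \ref{fiber} and \ref{commute} (which the paper compresses into a citation of Section 4), and on morphisms the paper likewise defines $\mb{A}(\Psi)$ as the direct limit of $\mb{F}\circ\Psi(V)$ over finite-dimensional submodules of $K[G]$, derives the algebra property from naturality, and proves $G$--equivariance from the naturality square for $a^{\ast}$ together with the observation that the induced map on $\mb{\overline{E}_{\sharp}}(K[G'])=K[G']$ is the identity. No gaps.
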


\begin{proof} It follows from Section 4 that $\mb{\overline{E}_{\sharp}}(K[G])$ is an 
object in $\mf{Calg}_G(\Sigma)$.

Let $\Psi: \mb{E}^1 \longrightarrow \mb{E}^2$ be a morphism in $\mf{Nor}^T(X)$.
 For any morphism $f\,:\,
V\,\longrightarrow\,W$ in $G$--mod, we have a commuting diagram.
$$
\begin{CD}
\mb{E^{1}_{\sharp}}(V) @> \mb{F}\circ\Psi(V) >> \mb{E^{2}_{\sharp}}(V) \\
@V \mb{E^{2}_{\sharp}}(f)VV @V{\mb{E^{2}_{\sharp}}(f)}VV \\
 \mb{E^{1}_{\sharp}}(W) @>\mb{F}\circ \Psi(W)>> \mb{E^{2}_{\sharp}}(W)
\end{CD}
$$

So the direct limit of the morphisms $\mb{F}\circ\Psi(V)$ exists
as $V$ runs over all finite dimensional $G$--submodules of
$K[G]$. We denote this limit by $\mb{A}(\Psi)\,:\,
\mb{\overline{E}^1_{\sharp} }(K[G])\,\longrightarrow\,\mb{\overline{E}^2_{\sharp}
}(K[G])$.

Let $G'$ be a copy of $G$ with trivial $G$--action as in
\eqref{trivac1}. We will denote the limit of $\mb{F} \circ \Psi(V)$ as $V$ varies
over all finite dimensional $G$--submodules of $K[G']$ by
$\mb{A}'(\Psi)$. Note that $\mb{\overline{E}^{j}_{\sharp}}(K[G']) $ is the algebra
$K[G'] $ with the trivial filtration \eqref{trivfil} and $\mb{A}'(\Psi) = \rm{id}$ using property F3(c).

Since the $\mb{F}\circ \Psi(V)$'s are
morphisms of filtered vector spaces and the filtration on
$\mb{\overline{E}^{j}_{\sharp}}(K[G])$ is the direct limit of the
filtrations on $ \mb{\overline{E}^{j}_{\sharp}}(V)$, it follows that $\mb{A}(\Psi)$ is
a morphism of $\Sigma$--filtered vector spaces.

Since $\Psi$, by definition, respects F1-F4, it follows that ${\mb{A}}(\Psi)$ is a morphism of algebras.
Regard the action of $G$ on $K[G]$ as a morphism of algebras $a^*
\,:\, K[G]\,\longrightarrow\,K[G] \otimes K[G']$. By \eqref{kgp}, the $G$--action on
$\mb{\overline{E}^{j}_{\sharp}}(K[G])$ is given by
$$ \mb{\overline{E}^{j}_{\sharp}}(a^*)\,:\, \mb{\overline{E}^{j}_{\sharp}}(K[G])
\,\longrightarrow\, \mb{\overline{E}^{j}_{\sharp}}(K[G])
\otimes K[G'] \,.$$ Again, using functoriality, we have a
commutative diagram
$$
\begin{CD}
\mb{\overline{E}^{1}_{\sharp}}(K[G]) @> \mb{\overline{E}^{1}_{\sharp}}(a^*) >> \mb{\overline{E}^{1}_{\sharp}}(K[G])
\otimes K[G']\\
@V \mb{A}(\Psi) VV @V \mb{A}(\Psi) \otimes \rm{id} VV \\
\mb{\overline{E}^{2}_{\sharp}}(K[G]) @> \mb{\overline{E}^{2}_{\sharp}}(a^*)
>> \mb{\overline{E}^{2}_{\sharp}}(K[G]) \otimes K[G']
\end{CD}
$$
This shows that $\mb{A}(\Psi)$ is $G$--equivariant.
\end{proof}

\begin{lemma}\label{faithful} The functor $\mb{A} \,:\,\mf{Nor}^T(X)
\,\longrightarrow \,\mf{Calg}_G(\Sigma)$ is faithful.
\end{lemma}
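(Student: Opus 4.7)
The plan is to leverage two facts: (i) the Klyachko filtration functor $\mb{F}\,:\,\mf{Vec}^T(X)\,\longrightarrow\,\mf{Cvec}(\Sigma)$ is an equivalence of categories, hence faithful; and (ii) since $G$ is reductive, every finite dimensional $G$--module embeds as a $G$--submodule of $K[G]^n$ for some $n$, because the regular representation contains a copy of every irreducible. Suppose $\Psi_1,\, \Psi_2\,:\, \mb{E}^1 \,\longrightarrow\, \mb{E}^2$ are morphisms in $\mf{Nor}^T(X)$ with $\mb{A}(\Psi_1) \,=\, \mb{A}(\Psi_2)$. I must show $\Psi_1(V) \,=\, \Psi_2(V)$ for every $V \,\in\, G$--mod.

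First I would handle the case of a finite dimensional $G$--submodule $V \,\subset\, K[G]$. The inclusion $V \,\hookrightarrow\, K[G]$ induces, via the construction in Lemma \ref{cext2}, an injective morphism $\mb{E}^i_{\sharp}(V) \,\hookrightarrow\, \mb{\overline{E}^i_{\sharp}}(K[G])$ for $i \,=\, 1,\,2$; injectivity holds because $\mb{\overline{E}^i_{\sharp}}$ is obtained as a direct limit of inclusions of finite dimensional $\Sigma$--filtered vector spaces, and direct limits preserve injections. By the very definition of $\mb{A}(\Psi_j)$ as the direct limit of the maps $\mb{F}\circ \Psi_j(V)$, the restriction of $\mb{A}(\Psi_j)$ to the image of $\mb{E}^1_{\sharp}(V)$ coincides with $\mb{F}(\Psi_j(V))$ composed with the inclusion into $\mb{\overline{E}^2_{\sharp}}(K[G])$. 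So the hypothesis $\mb{A}(\Psi_1) \,=\, \mb{A}(\Psi_2)$ forces $\mb{F}(\Psi_1(V)) \,=\, \mb{F}(\Psi_2(V))$ for every such $V$. Faithfulness of $\mb{F}$ then gives $\Psi_1(V) \,=\, \Psi_2(V)$.

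To handle a general $W \,\in\, G$--mod, I would choose a $G$--equivariant embedding $W \,\hookrightarrow\, K[G]^n$, which exists by reductivity. Letting $V_k \,=\, \pi_k(W) \,\subset\, K[G]$ be the image of $W$ under the $k$-th projection, each $V_k$ is a finite dimensional $G$--submodule of $K[G]$, and the map $w \,\longmapsto\, (\pi_1(w),\, \dots,\, \pi_n(w))$ provides a $G$--equivariant injection $W \,\hookrightarrow\, V_1 \oplus \cdots \oplus V_n$ in $G$--mod. The previous step yields $\Psi_1(V_k) \,=\, \Psi_2(V_k)$ for every $k$, and $K$--additivity of $\mb{E}^j$ and $\Psi_j$ (property F1) upgrades this to equality on the direct sum. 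Finally, naturality of $\Psi_j$ applied to the embedding $W \,\hookrightarrow\, V_1 \oplus \cdots \oplus V_n$, together with injectivity of $\mb{E}^j$ on this inclusion (exactness, again F1), yields $\Psi_1(W) \,=\, \Psi_2(W)$. The only mildly delicate point is the identification of the restriction of the direct limit morphism $\mb{A}(\Psi_j)$ with $\mb{F}(\Psi_j(V))$ on each finite dimensional piece together with the injectivity of $\mb{E}^i_{\sharp}(V) \,\hookrightarrow\, \mb{\overline{E}^i_{\sharp}}(K[G])$; both are formal consequences of the construction in Lemma \ref{cext2}, and no further structural input is needed.
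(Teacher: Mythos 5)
Your argument is correct, but it takes a genuinely different route from the paper's. The paper works geometrically: since the underlying algebra of $\mb{A}(\mb{E}^j)$ is the coordinate ring of the fibre of $\mb{N}^T_1(\mb{E}^j)$ at $x_0$, the equality $\mb{A}(\Psi_1)=\mb{A}(\Psi_2)$ forces $\mb{N}^T_1(\Psi_1)$ and $\mb{N}^T_1(\Psi_2)$ to agree over $x_0$, hence over the dense orbit $O$ by $T$--equivariance, hence over all of $X$ by continuity, and faithfulness of $\mb{N}^T_1$ finishes. You instead stay entirely at the level of filtered vector spaces: you unwind $\mb{A}(\Psi_j)$ as the direct limit of the $\mb{F}\circ\Psi_j(V)$ over finite dimensional $G$--submodules $V\subset K[G]$, use injectivity of the canonical maps $\mb{E}^i_{\sharp}(V)\hookrightarrow \mb{\overline{E}^i_{\sharp}}(K[G])$ (a direct limit of injections, by exactness) to recover $\mb{F}(\Psi_1(V))=\mb{F}(\Psi_2(V))$, invoke faithfulness of Klyachko's equivalence $\mb{F}$, and then reach an arbitrary $W\in G$--mod via an equivariant embedding $W\hookrightarrow K[G]^n$ together with F1 (additivity and exactness) and naturality. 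Your approach is purely categorical and avoids any appeal to density of the orbit or continuity of morphisms of schemes, at the cost of the extra bookkeeping with direct limits and the embedding into powers of the regular representation (which, incidentally, holds for any affine algebraic group, not only reductive ones); the paper's argument is shorter but leans on the geometry of the toric situation and on the faithfulness of Nori's functor $\mb{N}^T_1$ rather than of $\mb{F}$. Both are sound.
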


 \begin{proof} Let $\Psi_j \,:\, \mb{E}^1 \,\longrightarrow\, \mb{E}^2$, $j\,=\, 1,2,$ be two morphisms.
 By Lemma \ref{fiber}, the underlying algebra of $\mb{A}(\mb{E}^j)$ is the coordinate algebra
$$ K[ {\overline{E}}^j(K[G]) (x_0)] = K[ \mb{N}^{T}_1(\mb{E}^j ) (x_0)]$$
 If $\mb{A}(\Psi_1) \,=\, \mb{A}(\Psi_2)$, then
$$\mb{N}^{T}_1 (\Psi_1)|_{x_0} \,=\, \mb{N}^{T}_1 (\Psi_1)|_{x_0}\,. $$

Now, by $T$--equivariance, $\mb{N}^{T}_1 (\Psi_1)$ and
$\mb{N}^{T}_1 (\Psi_2)$ must agree over the open $T$--orbit.
Hence, they must agree over $X$. Since
$\mb{N}_1^{T}$ is faithful, we conclude that $\Psi_1 \,=\, \Psi_2$.
\end{proof}

\begin{lemma}\label{inter} Consider an object $B$ in $\mf{Calg}_G(\Sigma)$. Fix
an embedding $\theta$ of $G$ in ${\rm GL}(V)$. Then the
 $\Sigma$--filtration on $B$ induces a compatible $\Sigma$--filtration on 
 $(B \otimes K[V])^G$.
\end{lemma}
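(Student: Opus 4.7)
My plan is to put the tensor product $\Sigma$--filtration on $B \otimes K[V]$ taking $K[V]$ with the trivial filtration ($K[V]^{\rho}(q) = K[V]$ for $q \le 0$ and $0$ otherwise). Formula \eqref{ten} then collapses to $(B \otimes K[V])^{\rho}(i) \,=\, B^{\rho}(i) \otimes K[V]$. I then define the induced filtration on the $G$--invariants by
$$\bigl((B \otimes K[V])^G\bigr)^{\rho}(i) \,:=\, \bigl(B^{\rho}(i) \otimes K[V]\bigr) \cap (B \otimes K[V])^G.$$
What remains is to exhibit a compatible decomposition in the sense of Definition \ref{csfvs}; by Remark \ref{remcomp}, it suffices to do this for each top-dimensional cone $\sigma$.

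Fix such a $\sigma$. By hypothesis, $B$ carries a $T$--action commuting with $G$ whose weight decomposition $B \,=\, \bigoplus_{[u] \in M_\sigma} B^{\sigma}_{[u]}$ induces the filtration $B^{\rho}(i) \,=\, \bigoplus_{[u](\rho)\ge i} B^{\sigma}_{[u]}$ for $\rho \in \sigma \cap |\Sigma(1)|$ (Remark \ref{remact}). The key observation I will use is that the commutativity of the $T$-- and $G$--actions forces each weight space $B^{\sigma}_{[u]}$ to be $G$--stable: if $b \in B^{\sigma}_{[u]}$ and $g \in G$, then $t(gb) = g(tb) = \chi_{[u]}(t) \, gb$ for every $t \in T$, so $gb \in B^{\sigma}_{[u]}$. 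Consequently, each $B^{\rho}(i)$ is $G$--stable as well.

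Tensoring with $K[V]$ and passing to $G$--invariants then yields
$$(B \otimes K[V])^G \,=\, \bigoplus_{[u] \in M_\sigma} \bigl(B^{\sigma}_{[u]} \otimes K[V]\bigr)^G,$$
since taking $G$--invariants commutes with direct sums of $G$--submodules. Setting $\bigl((B \otimes K[V])^G\bigr)^{\sigma}_{[u]} \,:=\, \bigl(B^{\sigma}_{[u]} \otimes K[V]\bigr)^G$ and using the same commutativity, one directly checks that for every $\rho \in \sigma \cap |\Sigma(1)|$,
$$\bigl((B \otimes K[V])^G\bigr)^{\rho}(i) \,=\, \bigl(B^{\rho}(i) \otimes K[V]\bigr)^G \,=\, \bigoplus_{[u](\rho)\ge i} \bigl((B \otimes K[V])^G\bigr)^{\sigma}_{[u]},$$
which is exactly the shape required by Definition \ref{csfvs}.

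The essential content of the proof is the $G$--stability of the $T$--weight spaces of $B$, which is forced by the commuting actions; the remaining steps are bookkeeping about invariants commuting with direct sums. Independence of the filtration from the choice of top-dimensional cone $\sigma$ containing $\rho$ is inherited directly from the compatibility built into $B$ itself, since the subspace $B^{\rho}(i) \subseteq B$ does not depend on $\sigma$.
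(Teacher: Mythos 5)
Your proposal is correct and follows essentially the same route as the paper: both arguments hinge on the observation that the commutativity of the $T$-- and $G$--actions makes each weight space $B^{\sigma}_{[u]}$ (and hence each $B^{\rho}(i)$) a $G$--submodule, so that passing to $G$--invariants commutes with the weight decomposition, yielding $((B\otimes K[V])^G)^{\rho}(i) = (B^{\rho}(i)\otimes K[V])^G = \bigoplus_{u(\rho)\ge i}(B^{\sigma}_{[u]}\otimes K[V])^G$. The only cosmetic difference is the order of presentation: you define the filtration as an intersection first and then exhibit the compatible decomposition, while the paper defines it via the decomposition and then proves equality with $(B^{\rho}(i)\otimes K[V])^G$ to get independence of $\sigma$.
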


\begin{proof}
Suppose $\sigma$ is a maximal cone. Consider an action of 
$T_{\sigma}=T$ on $B$ which is compatible with the $\Sigma|_{\sigma}$--filtration. Let
$$
B \,=\, \bigoplus_{u \in M_{\sigma}} B^{\sigma}_u \,.
$$
 be the corresponding isotypical decomposition. 
 
 We first claim that 
 \begin{equation}\label{decom2} 
 (B \otimes K[V])^G \,=\, \bigoplus_{u \in M_{\sigma}} (B^{\sigma}_u \otimes K[V])^G \, . \end{equation} 
 
 Let $x \in (B \otimes K[V])^G$. We may write $x$ uniquely as a finite sum 
 \begin{equation}\label{eqnx} 
x\,=\, \sum x_u\, ,
 \end{equation}
 where $x_u \,\in\, B^{\sigma}_u \otimes K[V]$. 
 Since the actions of $T$ and $G$ on $B$ commute, $B^{\sigma}_u$ is $G$--invariant. 
 This implies that $g x_u \,\in\, B^{\sigma}_u \otimes K[V]$ for any $g \,\in\, G$. 
 Since $gx=x$, we have 
$$
x\,=\, \sum g x_u
$$
 which is another decomposition of $x$ with components in $B^{\sigma}_u \otimes K[V]$.
 By the uniqueness of \eqref{eqnx}, we must have $gx_u = x_u$ for all $u$ and $g$.
 This means that $x_u \,\in\, (B^{\sigma}_u \otimes K[V])^G $. Hence, 
$$ (B \otimes K[V])^G \,\subseteq\, \bigoplus_{u \in M_{\sigma}} (B^{\sigma}_u \otimes K[V])^G \, .$$
 Clearly
 $$(B \otimes K[V])^G \,\supseteq\, \bigoplus_{u \in M_{\sigma}} (B^{\sigma}_u \otimes K[V])^G \, . $$
 Hence \eqref{decom2} follows. Using it we conclude that 
 \begin{equation}\label{decom3b}
 ((B \otimes K[V])^G)^{\sigma}_u\,=\, (B^{\sigma}_u \otimes K[V])^G \,. \end{equation}
 
 By the compatibility of the $\Sigma$--filtration on $B$, the decomposition 
 $$ B^{\rho}(i) = \bigoplus_{ u(\rho) \ge i} B^{\sigma}_u $$
 is independent of the choice of $\sigma$ such that $\rho \in \sigma$.
 By \eqref{decom3b}, 
 \begin{equation}\label{decomdef}
((B \otimes K[V])^G)^{\rho}(i) \,=\, \bigoplus_{ u(\rho)\ge i} (B^{\sigma}_u \otimes K[V])^G \,.
\end{equation}
 To show the independence of \eqref{decomdef} of the choice of maximal cone 
 $\sigma$, we want to show that 
 \begin{equation} \label{decomf}
((B \otimes K[V])^G)^{\rho}(i) \,= \,(B^{\rho}(i) \otimes K[V])^G \, .
\end{equation}

If $v(\rho) \ge i$, then 
$$(B^{\sigma}_v \otimes K[V])^G \,\subseteq\,(( \bigoplus_{u(\rho) \ge i } B^{\sigma}_u) \otimes K[V])^G \,=\,(B^{\rho}(i) \otimes K[V])^G \,. $$ Therefore, by \eqref{decomdef}, 
\begin{equation}\label{con1}
((B \otimes K[V])^G)^{\rho}(i) \,\subseteq\, (B^{\rho}(i) \otimes K[V])^G \, . 
\end{equation}

On the other hand, suppose $x \in (B^{\rho}(i) \otimes K[V])^G$. Then 
$x$ admits a unique decomposition $$ x= \sum_{u(\rho) \ge i} x_u$$ where 
$x_u \,\in\, B^{\sigma}_u \otimes K[V]$. Then by using the $G$--invariance of $x$ and the uniqueness of the 
decomposition as before, we conclude that $g x_u= x_u$ for all $g \in G$.
Hence $x_u \in (B^{\sigma}_u \otimes K[V])^G$, and consequently,
$$x \,\in\, \bigoplus_{ u(\rho)\ge i} (B^{\sigma}_u \otimes K[V])^G = ((B \otimes K[V])^G)^{\rho}(i) \,.$$
Hence, \begin{equation}\label{con2}
((B \otimes K[V])^G)^{\rho}(i) \,\supseteq \, (B^{\rho}(i) \otimes K[V])^G \, .
\end{equation}
By \eqref{con1} and \eqref{con2}, equation \eqref{decomf} holds, concluding the proof.
\end{proof}

\begin{lemma}\label{surj} Assume all maximal cones in $\Sigma$ are top dimensional. Then the 
functor $\mb{A} : \mf{Nor}^T(X) \longrightarrow \mf{Calg}_G(\Sigma) $ is essentially surjective.
\end{lemma}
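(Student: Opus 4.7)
Given $B \in \mf{Calg}_G(\Sigma)$, the plan is to construct a Nori functor $\mb{E} \in \mf{Nor}^T(X)$ with $\mb{A}(\mb{E}) \cong B$. The idea is to invert, at the level of fibers over $x_0$, the construction of Lemma \ref{inter}: for each finite-dimensional $G$--module $V$ the desired fiber at $x_0$ should be $\hat{V} := (B \otimes V)^G$, equipped with the $\Sigma$--filtration $\hat{V}^\rho(i) := (B^\rho(i) \otimes V)^G$. For each top-dimensional cone $\sigma$, the compatible $T$--action on $B$ yields a decomposition $B = \bigoplus_{[u] \in M_\sigma} B^\sigma_{[u]}$, inducing $\hat{V} = \bigoplus_{[u]} (B^\sigma_{[u]} \otimes V)^G$. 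The argument of Lemma \ref{inter} (which uses only the $G$--module structure of the second tensor factor, not any algebra structure) shows this is a compatible decomposition inducing the given filtration; by Remark \ref{remcomp}, this suffices under the standing hypothesis. Since $B \cong K[G]$ as $G$--modules, $\hat{V}$ is finite-dimensional (in fact canonically isomorphic to $V$). I then set $\mb{E}(V) := \mb{K}(\hat{V})$, where $\mb{K}$ is Klyachko's quasi-inverse of $\mb{F}$.

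The next step is to verify Nori's properties F1--F4 for $\mb{E}$. Exactness (F1) follows because $G$ is reductive (so $(-)^G$ is exact), tensoring with $B$ preserves exactness of vector space sequences, and $\mb{K}$ is exact. For F2, the algebra multiplication $m : B \otimes B \to B$ induces a natural map
\[
\mu_{V,W} : \hat{V} \otimes \hat{W} \,\longrightarrow\, \widehat{V \otimes W}, \qquad (b_1 \otimes v) \otimes (b_2 \otimes w) \,\longmapsto\, m(b_1 \otimes b_2) \otimes v \otimes w.
\]
Under the $G$--equivariant algebra isomorphism $B \cong K[G]$, this identifies with the canonical isomorphism $V \otimes W \cong V \otimes W$, hence is an isomorphism of vector spaces. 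That $\mu_{V,W}$ respects the $\Sigma$--filtration uses precisely the additive compatibility $m(B^\rho(i) \otimes B^\rho(j)) \subseteq B^\rho(i+j)$ of Definition \ref{sfa}, together with the multiplicative compatibility \eqref{ca3} to match the compatible decompositions on source and target. Properties F3 (associativity, commutativity, unit) are routine from the corresponding properties of $B$; faithfulness (F4) is immediate from the identification $\hat{V} \cong V$. Finally, for a morphism $f : V \to W$ of $G$--modules, the induced map $\text{id} \otimes f$ on $B \otimes V$ commutes with the filtration and the $T$--decompositions, so $\mb{E}$ is well-defined on morphisms and $T$--equivariance of $\mb{E}(V)$ is built in.

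It remains to exhibit an isomorphism $\mb{A}(\mb{E}) \cong B$ in $\mf{Calg}_G(\Sigma)$. By construction $\mb{E}_\sharp(V) = \hat{V}$, so $\mb{\overline{E}_\sharp}(K[G])$ is the direct limit of $(B \otimes V)^G$ over all finite-dimensional $G$--submodules $V \subset K[G]$, which equals $(B \otimes K[G])^G$. The $G$--equivariant algebra isomorphism $B \cong K[G]$ yields the algebra isomorphism $(B \otimes K[G])^G \cong (K[G] \otimes K[G])^G \cong K[G] \cong B$, under which the filtration $(B^\rho(i) \otimes K[G])^G$ corresponds to $B^\rho(i)$, and likewise for the compatible $T$--decompositions. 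The main obstacle throughout will be keeping all three structures (algebra, $\Sigma$--filtration, and $T$--action for each top-dimensional cone) simultaneously compatible under the $G$--invariants functor; this is ultimately controlled by the commutation of the $G$-- and $T$--actions on $B$ (provided by Lemma \ref{commute} in the definition of $\mf{Calg}_G(\Sigma)$) together with the multiplicative compatibility \eqref{ca3}, which are exactly the axioms introduced for this purpose.
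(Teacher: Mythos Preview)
Your proposal is correct, and it takes a genuinely different route from the paper's proof. The paper proceeds geometrically: for each top-dimensional cone $\sigma$ it fixes a compatible $T$--action on $B$, builds the trivial bundle $E^{\sigma}_G = X_{\sigma} \times \mathrm{Spec}(B)$, and then shows that the resulting transition functions $\rho_{\sigma}\rho_{\tau}^{-1}$ extend regularly over $X_{\sigma}\cap X_{\tau}$ by choosing a faithful representation $G \hookrightarrow \mathrm{GL}(V)$, applying Lemma~\ref{inter} to $(B\otimes K[V])^G$ to get a compatible $\Sigma$--filtration on $V$, and invoking Klyachko's theorem once to extend the $\mathrm{GL}(V)$--valued (hence $G$--valued) transition maps. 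The glued bundle $E_G$ is then checked to satisfy $\mb{A}(\mb{N}_0^T(E_G))\cong B$ directly.

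Your approach is instead purely Tannakian: you build the Nori functor $V\mapsto \mb{K}\bigl((B\otimes V)^G\bigr)$ all at once, applying the Lemma~\ref{inter} argument and Klyachko's functor $\mb{K}$ to \emph{every} $V$ rather than a single faithful one, and you use the $\Sigma$--filtered algebra axioms on $B$ (Definition~\ref{sfa} and \eqref{ca3}) precisely to verify F2. This avoids the auxiliary embedding $G\hookrightarrow\mathrm{GL}(V)$ and the explicit cocycle computation, and makes the identification $\mb{A}(\mb{E})\cong (B\otimes K[G])^G\cong B$ fall out formally. The paper's argument, on the other hand, produces the principal bundle concretely and shows in passing how one recovers Klyachko's vector-bundle data from the algebra picture. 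One small point worth making explicit in your write-up: to conclude that $\mu_{V,W}$ is an isomorphism in $\mf{Cvec}(\Sigma)$ (not merely filtration-preserving in one direction), note that for each top-dimensional $\sigma$ it is a graded map for the $M_{\sigma}$--gradings on both sides (by \eqref{ca3}), hence its inverse is graded and therefore filtration-preserving as well.
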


\begin{proof} Consider an object $B$ in $\mf{Calg}_G(\Sigma)$.
Consider a top-dimensional cone $\sigma$ in $\Sigma$. Note that $T_{\sigma} =T$.

Fix a $T$--action on $B$ which is compatible with the $\Sigma$--filtration and 
commutes with the $G$--action on $B$. Define $E^{\sigma}_G \,=\, X_{\sigma} \times 
{\rm Spec}(B)$. With the induced actions of $T$ and $G$, this $E^{\sigma}_G$ is a 
$T$--equivariant principal $G$--bundle over $X_{\sigma}$.

Fix a closed point $y_0$ in ${\rm Spec}(B)$. Recall the closed point $x_0 \in O$ used in the construction of $\mb{F}$. 
Let $e=(x_0,y_0) \in E^{\sigma}_G$. The $T$--action on ${\rm Spec}(B)$ may be represented by a
homomorphism $$\rho_{\sigma}: T \longrightarrow G, \quad {\rm defined\; by} \quad t y_0 \,= \, y_0 \cdot \rho_{\sigma}(t) \;
{\rm for \; any}\; t \,\in\, T \, .$$
We claim that for any two top-dimensional cones $\sigma$ and $\tau$, the functions
$$\rho_{\sigma} \rho_{\tau}^{-1} \,:\, T \,\longrightarrow\, G $$ extend to regular $G$--valued functions over $X_{\sigma} \bigcap X_{\tau}$ under the
standard identification of $T$ with the open orbit $O$ in
$X$.

Fix an embedding $\theta$ of $G$ in ${\rm GL}(V)$. Let $E^{\sigma} = E^{\sigma}_G \times^G V$ be the associated
$T$--equivariant vector bundle.
The actions of $T$ and $G$ on ${\rm Spec}(B)\,=\,E^{\sigma}_G (x_0)$ commute and hence induce a
$T$--action on $E^{\sigma}(x_0) \,=\, {\rm Spec}(B)\times^G V $. Using a specific isomorphism 
\begin{equation}\label{siso}
{\rm Spec}(B)\times^G V \,\cong\, V \quad {\rm induced \, by \, the\, rule}\quad [(y_0,v)] \longmapsto v \,,\end{equation} we obtain an induced
$T$--action and therefore a $\Sigma|_{\sigma}$--filtration on $V$. 
We claim that as $\sigma$ varies, this induces a compatible
$\Sigma$--filtration on $V$. We will derive this from the compatibility of the $\Sigma$--filtration on the algebra $B$.
 
The $T$--action on $B$, for any fixed $\sigma$, induces a $T$--action on $(B \otimes 
K[V])^G$. Here the action of $G$ on $K[V]$ is induced by $\theta$, and the action of 
$T$ on $K[V]$ is assumed to be trivial. As $\sigma$ varies, from Lemma \ref{inter} 
it follows that these actions yield a compatible $\Sigma$--filtration on $(B \otimes 
K[V])^G$.

Since ${\rm Spec}(B)\times V$ is affine and $G$ is reductive, ${\rm Spec}(B) \times^G V \,=\, 
{\rm Spec}(( B \otimes K[V])^G) $ \cite[Theorem 1.1, page 27]{Mum}. The isomorphism \eqref{siso} induces a specific isomorphism 
$(B \otimes K[V])^G \cong K[V]$. This gives a $\Sigma$--filtration on $K[V]$. By linearity of the 
$G$--action on $V$, the cone-wise $T$--actions on $K[V]$ are determined by cone-wise $T$--actions 
on the dual $V^{\ast}$ of $V$. These determine a $\Sigma$--filtration on $V^{\ast}$, which is 
compatible as it is the restriction of the compatible $\Sigma$--filtration on $K[V]$. We have an 
induced compatible dual $\Sigma$--filtration on $V$ (see \cite{IS}, section 6.3). This 
$\Sigma$--filtration on $V$ agrees with the $\Sigma$--filtration on $V$ derived immediately after 
\eqref{siso} as the cone-wise $T$--actions match. This proves the claim regarding the compatibility of that $\Sigma$--filtration.

Note that the $T$--action on $V$, associated to the cone $\sigma$, is given by
 $\theta (\rho_{\sigma})$. Since the $\Sigma$--filtration on $V$ is compatible, 
 by Klyachko \cite{Kly}, it gives rise to a 
$T$--equivariant vector bundle over $X$ and
the ${\rm GL}(V)$--valued functions $\theta (\rho_{\sigma} \rho_{\tau}^{-1})$ extend regularly over $X_{\sigma} \bigcap X_{\tau}$. It follows that the functions
$\rho_{\sigma} \rho_{\tau}^{-1}$ extend regularly over $X_{\sigma} \bigcap X_{\tau}$. 
Since $G$ is closed in ${\rm GL}(V)$, the extensions are $G$--valued.
This allows us to construct a $T$--equivariant principal
$G$--bundle $E_G$ over $X$ by gluing the bundles $\{ E^{\sigma}_G \}$
using $\{\rho_{\sigma} \rho_{\tau}^{-1}\}$ as transition functions.

It is straightforward to show that $\mb{A}(\mb{N}_0^T(E_G)) \cong B$: As an algebra, $\mb{A}(\mb{N}_0^T(E_G)) \,=\, {\rm Spec}(B) \times^G K[G]$.
We have an isomorphism $$ \alpha_{\ast}\,:\, {\rm Spec}(B) \times^G K[G]\,\longrightarrow\, K[G] \,,$$ induced by the $G$--equivariant isomorphism $$\alpha\,:\, {\rm Spec}(B) \,\longrightarrow \,G \,, \quad {\rm where}\; \alpha(y_0) = 1_G\, .$$ Moreover, the map
$\alpha$ also induces an isomorphism $$\alpha^{\ast}: K[G] \longrightarrow B\,.$$ Thus we have an isomorphism 
$$ \alpha^{\ast} \circ \alpha_{\ast} \,:\, {\rm Spec}(B) \times^G K[G]\,\longrightarrow\, B \,.$$

Given a right $T$--action $\rho_{\sigma}$ on ${\rm Spec}(B)$, let $\rho_{\sigma}^{\ast}$ be
the induced $T$--action on $B$. These actions, as $\sigma$ varies, produce the $\Sigma$--filtration on $B$. The 
$\Sigma|_{\sigma}$--filtration or $T_{\sigma}$--action on ${\rm Spec}(B)\times^G K[G]$ is induced
from $\rho_{\sigma}$ by $\alpha$. Let us call this action $\alpha_{\ast}(\rho_{\sigma})$. Note that the action on $B$ induced from $\alpha_{\ast}(\rho_{\sigma})$ by the isomorphism $\alpha^{\ast}$ is same as $\rho_{\sigma}^{\ast}$. Thus $\alpha^{\ast} \circ \alpha_{\ast}$ induces the required isomorphism of $\Sigma$--filtrations.
\end{proof}

\begin{lemma}\label{full} Let $X$ be a toric variety such that
every maximal cone in its fan is top-dimensional. Then the functor
$$\mb{A} \,:\, \mf{Nor}^T(X) \,\longrightarrow\, \mf{Calg}_G(\Sigma)$$
(see \eqref{not3}, Definition \ref{def1} and Lemma \ref{leml}) is full. \end{lemma}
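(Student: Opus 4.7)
The plan is to construct, for any morphism $\beta\,:\,\mb{A}(\mb{E}^1)\,\longrightarrow\,\mb{A}(\mb{E}^2)$ in $\mf{Calg}_G(\Sigma)$, a morphism $\Psi\,:\,\mb{E}^1\,\longrightarrow\,\mb{E}^2$ in $\mf{Nor}^T(X)$ with $\mb{A}(\Psi)\,=\,\beta$. By Theorem \ref{equiv1} the functor $\mb{E}^j$ corresponds to a $T$-equivariant principal $G$-bundle $E^j_G\,=\,\mb{N}_1^T(\mb{E}^j)$, and by Lemma \ref{fiber} the algebra $\mb{A}(\mb{E}^j)$ is the coordinate ring of the fiber $E^j_G(x_0)$. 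Applying $\mathrm{Spec}$ to $\beta$ thus yields a $G$-equivariant isomorphism $\beta^*\,:\, E^2_G(x_0)\,\longrightarrow\, E^1_G(x_0)$.

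For each finite dimensional $V\,\in\, G$--mod, I would define a $K$-linear isomorphism
\begin{equation*}
\tilde\beta(V)\,:\, \mb{E}^1(V)(x_0)\,\longrightarrow\, \mb{E}^2(V)(x_0),\qquad [e,v]\,\longmapsto\, [(\beta^*)^{-1}(e),v],
\end{equation*}
which is well defined by the $G$-equivariance of $\beta^*$. The key step is to check that $\tilde\beta(V)$ is a morphism of compatible $\Sigma$-filtered vector spaces from $\mb{F}(\mb{E}^1(V))$ to $\mb{F}(\mb{E}^2(V))$. For this, observe that $\mb{E}^j(V)(x_0)\,=\, E^j_G(x_0)\times_G V$ has coordinate ring $(\mb{A}(\mb{E}^j)\otimes K[V])^G$, and $\tilde\beta(V)$ is $\mathrm{Spec}$ of the $G$-invariant part of $\beta\otimes\mathrm{id}_{K[V]}$. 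By Lemma \ref{inter}, both sides of $(\beta\otimes\mathrm{id})^G$ carry compatible $\Sigma$-filtrations inherited from those on $\mb{A}(\mb{E}^j)$, and filtration-preservation of $\beta$ transfers to $(\beta\otimes\mathrm{id})^G$. The very matching used in the proof of Lemma \ref{surj} then identifies the $\Sigma$-filtration on $(\mb{A}(\mb{E}^j)\otimes K[V])^G$ with the Klyachko filtration on the fiber of $\mb{E}^j(V)$ at $x_0$, so $\tilde\beta(V)$ lies in $\mf{Cvec}(\Sigma)$.

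Applying the Klyachko quasi-inverse $\mb{K}$ to each $\tilde\beta(V)$ produces a $T$-equivariant vector bundle isomorphism $\Psi(V)\,:\,\mb{E}^1(V)\,\longrightarrow\, \mb{E}^2(V)$. Since $\tilde\beta$ is manifestly natural in $V$ (it is induced by the single map $(\beta^*)^{-1}$) and satisfies $\tilde\beta(V\otimes W)\,=\,\tilde\beta(V)\otimes\tilde\beta(W)$ fiberwise, these properties pass to $\Psi(V)$ via $\mb{K}$ and Lemma \ref{Ftensor}. The trivial representation goes to the trivial bundle because $(\beta^*)^{-1}$ acts as the identity on the one-dimensional fiber, and faithfulness is inherited from $\mb{K}$; hence $\Psi$ is a natural isomorphism of functors satisfying F1--F4, i.e., a morphism in $\mf{Nor}^T(X)$. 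Finally, $\mb{A}(\Psi)\,=\,\beta$ follows by passing to the direct limit over finite dimensional $G$-submodules of $K[G]$, since $\tilde\beta$ was designed precisely to reproduce $\beta$ on fibers at $x_0$.

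The principal obstacle is the second paragraph: verifying that $\tilde\beta(V)$ preserves the compatible $\Sigma$-filtrations on fibers. This rests on Lemma \ref{inter} to propagate filtration data from $\mb{A}(\mb{E}^j)$ to $(\mb{A}(\mb{E}^j)\otimes K[V])^G$, and on the identification (already invoked in the proof of Lemma \ref{surj}) of this induced filtration with the Klyachko filtration on the associated bundle fiber.
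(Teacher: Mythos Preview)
Your approach is correct but follows a genuinely different route from the paper's proof. The paper does not work module-by-module through associated vector bundles. Instead, given $\phi\,:\,\mb{A}(\mb{E}^2)\,\longrightarrow\,\mb{A}(\mb{E}^1)$, it passes via ${\rm Spec}$ to a $G$-equivariant map of fibers, spreads this by $T$-equivariance to a principal bundle map over the open orbit $O$, and then extends across the boundary by applying Klyachko's theorem to finite-dimensional $\Sigma$-filtered \emph{subspaces of the algebra} $\mb{A}(\mb{E}^2)$ (not to $G$-modules $V$). A continuity argument then shows the extended map respects the fiberwise algebra structure and the $G$-action, yielding an isomorphism of $T$-equivariant principal $G$-bundles; finally one applies $\mb{N}_0^T$ and the natural isomorphism $\Phi^{-1}$ of Theorem~\ref{equiv1} to obtain $\Psi$.

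Your route instead builds $\Psi(V)$ directly for each $V\in G$--mod via the associated bundle, invoking Lemma~\ref{inter} to push the filtration from $\mb{A}(\mb{E}^j)$ to $(\mb{A}(\mb{E}^j)\otimes K[V])^G$, and then the duality step from the proof of Lemma~\ref{surj} to identify this with the Klyachko filtration on the fiber of $\mb{E}^j(V)$. This is legitimate, but note that it requires you to check that the filtration produced by Lemma~\ref{inter} (once restricted to linear forms and dualized) really agrees with $\mb{F}(\mb{E}^j(V))$; this boils down to both coming from the same equivariant trivialization of Lemma~\ref{compalg}, and uses that $\beta$ is an \emph{isomorphism} in $\mf{Calg}_G(\Sigma)$ so that dualizing a filtration-preserving map still yields a filtration-preserving map. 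The paper's approach sidesteps this duality bookkeeping entirely by staying inside the filtered algebra $\mb{A}(\mb{E}^j)$ and its finite-dimensional filtered subspaces. Your approach, on the other hand, is more modular and produces the natural transformation $\Psi$ without the detour through $\mb{N}_0^T$ and $\Phi^{-1}$. A small stylistic point: properties F1--F4 pertain to the objects $\mb{E}^j$, not to the morphism $\Psi$; what you need for $\Psi$ is simply that it be a natural isomorphism compatible with tensor products.
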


\begin{proof} 
 Let $\mb{E}^1,\, \mb{E}^2 \in \mf{Nor}^T(X)$ and $\phi: \mb{A}(\mb{E}^2) \longrightarrow
\mb{A}(\mb{E}^1)$ be a morphism in $\mf{Calg}_G(\Sigma)$. We need to 
show that there exists a morphism $\Psi \,:\, \mb{E}^1 \,\longrightarrow \,\mb{E}^2 $ in
$\mf{Nor}^T(X)$ such that $\mb{A}(\Psi)\,=\, \phi$.

Note that the underlying algebra of $\mb{A}(\mb{E}^j)$ is the algebra of functions on the
fiber of the principal $G$--bundle ${\overline{E}}^j(K[G])$ at $x_0$. Then $\phi$ induces a
$G$--equivariant morphism of varieties
$$\phi'\,:\, {\overline{E}}^1(K[G])(x_0)\,\longrightarrow\, {\overline{E}}^2(K[G])(x_0) \,.$$ The latter, by $T$--equivariance induces
a morphism of principal bundles
$$\phi'_{\ast}\,:\, {\overline{E}^1}(K[G])\,\longrightarrow\,{\overline{E}^2}(K[G])$$ over the
open orbit $O$ of $X$. 
This induces an isomorphism of the sheaves of $\mc{O}_O-$algebras
$$ \phi_{\ast}\,:\, {\overline{\mb{E}}^1}(K[G])\,\longrightarrow\,{\overline{\mb{E}}^2}(K[G]) \,.$$

For any finite dimensional $\Sigma$--filtered subspace $V$ of
$\mb{A}(\mb{E}^2)$, the restriction $$\phi\vert_V\, :\, V\,\longrightarrow \,\phi(V)$$
is an isomorphism
of $\Sigma$--filtered vector spaces. The corresponding restriction
of $\phi_{\ast}$ induces a $T$--equivariant isomorphism of vector
bundles over $O$. But since $\phi|_V$ respects the filtrations,
by the arguments of Klyachko \cite[Assertion 2.2.4]{Kly}, this extends to a $T$--equivariant
isomorphism of vector bundles over $X$. Then taking direct limit
over all finite dimensional $\Sigma$--filtered subspaces of
$\mb{A}(\mb{E}^2)$, we observe that $\phi_{\ast}$ extends over $X$ as a
$T$--equivariant isomorphism of quasi-coherent sheaves of 
$\mc{O}_X-$modules. We show that the extension is in fact an isomorphism of 
 $\mc{O}_X-$algebras.
 
For $j\,=\,1,\,2,$ let
 $$m_j\, :\, K[{\overline{\mb E}}^j(K[G])]\otimes
K[{\overline{\mb E}}^j(K[G])]\,\longrightarrow\, K[{\overline{\mb E}}^j(K[G])]$$
denote the multiplication in the sheaf ${\overline{\mb E}}^j(K[G]) $. 
Then by construction of $\phi_{\ast}$, the equality
$$ m_2 \circ (\phi_{\ast} \otimes \phi_{\ast})\,= \, \phi_{\ast} \circ m_1 \,$$
of morphisms of quasi-coherent sheaves of $\mc{O}_X-$modules, holds over $O$. Therefore, it holds over the closure $X$ of $O$.

We may similarly argue that $\phi_{\ast}$ is
$G$--equivariant. Thus $\phi_{\ast}$ induces an isomorphism $\widehat{\phi}_{\ast}\,:\, {\overline{ E}}^1(K[G])\,\longrightarrow\,
{\overline{ E}}^2(K[G])$ of $T$--equivariant principal
$G$--bundles. This induces an isomorphism
$$\mb{N}_0^T(\widehat{\phi}_{\ast})\,:\, \mb{N}_0^T ({\overline{E}}^1(K[G]))\,\longrightarrow\,
 \mb{N}_0^T ({\overline{ E}}^2(K[G]) )\,. $$

By Theorem \ref{equiv1}, there exists a functorial of isomorphism $\Phi \,:\, \mb{N}_0^T \circ \mb{N}_1^T \,\longrightarrow
\,1_{\mf{Nor}^{T}(X)}$. 
Applying it we have the required morphism
$$ \Psi \,=\, \Phi ( \mb{N}_0^T(\widehat{\phi}_{\ast}) )\,:\,\mb{\overline{E}^1}\,\longrightarrow\,
\mb{\overline{E}^2 \,.} $$ 
\end{proof}

The following theorem is a consequence of Theorem \ref{equiv1}, and Lemmas \ref{faithful}, 
\ref{surj} and \ref{full}.

\begin{theorem}\label{classi} Let $X$ be a toric variety
such that every maximal cone in its fan is top-dimensional. Then there is an equivalence of categories
between
$\mf{Pbun}_G^T(X)$ and $\mf{Calg}_G(\Sigma)$, where $G$ is a reductive group. \end{theorem}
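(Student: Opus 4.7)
The plan is to assemble the theorem from the equivalences already set up in the preceding sections. The overall strategy is to exhibit two categorical equivalences
$$\mf{Pbun}_G^T(X) \;\simeq\; \mf{Nor}^T(X) \;\simeq\; \mf{Calg}_G(\Sigma)$$
and compose them. The first equivalence is furnished immediately by Theorem \ref{equiv1} applied with $\Gamma = T$, realized by the mutually quasi-inverse functors $\mb{N}_0^T$ and $\mb{N}_1^T$. Thus the task reduces to establishing the second equivalence, namely that the functor $\mb{A}:\mf{Nor}^T(X) \to \mf{Calg}_G(\Sigma)$ constructed in Lemma \ref{leml} is itself an equivalence of categories.

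For this second step I would appeal to the standard categorical criterion: a functor is an equivalence if and only if it is fully faithful and essentially surjective. All three properties have already been verified in separate lemmas under the standing hypothesis that every maximal cone of $\Sigma$ is top-dimensional. Specifically, Lemma \ref{faithful} supplies faithfulness, Lemma \ref{full} supplies fullness, and Lemma \ref{surj} supplies essential surjectivity. Collecting these three statements gives at once that $\mb{A}$ is an equivalence of categories. Composing $\mb{A}\circ \mb{N}_0^T$ with any chosen quasi-inverse then yields the required equivalence between $\mf{Pbun}_G^T(X)$ and $\mf{Calg}_G(\Sigma)$.

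At this level of the argument there is no new obstacle left to confront. The serious difficulties are already encapsulated in the preceding lemmas, most notably the commutation of the $T$- and $G$-actions on $\mb{\overline{E}}_{\sharp}(K[G])$ proved in Lemma \ref{commute}, and the reconstruction of an equivariant principal $G$-bundle from an abstract object of $\mf{Calg}_G(\Sigma)$ carried out in Lemma \ref{surj}. The latter is where the top-dimensionality hypothesis on maximal cones is genuinely used: it guarantees $T_\sigma = T$ for every top-dimensional cone $\sigma$, so that a compatible $T$-action on the algebra $B$ can be translated into a homomorphism $\rho_\sigma : T \to G$ and then, after fixing an embedding $G \hookrightarrow {\rm GL}(V)$, into $G$-valued transition functions via Klyachko's classification of equivariant vector bundles. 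Hence the theorem follows simply by citing Theorem \ref{equiv1} and Lemmas \ref{faithful}, \ref{full}, and \ref{surj}.
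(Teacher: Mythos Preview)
Your proposal is correct and matches the paper's own proof exactly: the paper simply states that the theorem is a consequence of Theorem \ref{equiv1} together with Lemmas \ref{faithful}, \ref{surj}, and \ref{full}. Your additional commentary on where the top-dimensionality hypothesis enters is accurate but goes beyond what the paper records at this point.
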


\section{Reduction of structure group}

\begin{theorem}\label{reduc} Suppose $H$ is a reductive subgroup of $G$, and
let $E_G$ be a $T$--equivariant principal $G$--bundle over $X$.
Let $S \,=\, { \overline{\mb{E}}_{\sharp} }
(K[G])$, where $\mb{E}\,= \,\mb{N}^{T}_0(E_G)$.
Then $E_G $ admits a $T$--equivariant reduction of structure group 
to $H$ if and only if there exists a filtered algebra $R \in \mf{Calg}_H(\Sigma)$ such that 
$(R \otimes K[G])^H \cong S$ in $\mf{Calg}_G(\Sigma) $. 
\end{theorem}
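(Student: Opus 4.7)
The plan is to apply Theorem \ref{classi} both to $G$ and to the reductive subgroup $H$, together with the standard correspondence between reductions of structure group and associated bundle constructions.

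\emph{Forward direction.} Suppose $E_G$ admits a $T$-equivariant reduction, i.e.\ $E_G \cong E_H \times_H G$ for some $T$-equivariant principal $H$-bundle $E_H$. Apply Theorem \ref{classi} to the reductive group $H$: let $R \in \mf{Calg}_H(\Sigma)$ be the compatible $\Sigma$-filtered $H$-algebra associated to $E_H$, namely $R = \mb{\overline{E}_{H,\sharp}}(K[H])$ where $\mb{E}_H$ is the Nori functor attached to $E_H$. At the level of underlying $G$-algebras, the fiber identification $E_G(x_0) \cong E_H(x_0) \times_H G$ combined with the standard formula for functions on a quotient by a reductive group action gives $S \cong (K[E_H(x_0)] \otimes K[G])^H = (R \otimes K[G])^H$. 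To upgrade this to an isomorphism in $\mf{Calg}_G(\Sigma)$, one must check that the $\Sigma$-filtration and cone-wise $T$-actions transport correctly. For each top-dimensional cone $\sigma$, the equivariant trivialization of $E_H$ over $X_\sigma$ (guaranteed by Lemma \ref{compalg}) is encoded by a homomorphism $\rho_\sigma^H: T \to H$, and composition with the inclusion $\iota: H \hookrightarrow G$ furnishes the induced equivariant trivialization of $E_G$ over $X_\sigma$, so the cone-wise $T$-decompositions coincide. Compatibility of the resulting filtration on $H$-invariants then proceeds exactly as in Lemma \ref{inter}.

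\emph{Reverse direction.} Given $R \in \mf{Calg}_H(\Sigma)$ with an isomorphism $(R \otimes K[G])^H \cong S$ in $\mf{Calg}_G(\Sigma)$, apply Theorem \ref{classi} to $H$ to obtain a $T$-equivariant principal $H$-bundle $E_H$ whose associated compatible $\Sigma$-filtered $H$-algebra is $R$. By the forward direction applied to $E_H$, the filtered $G$-algebra attached to the associated bundle $E_H \times_H G$ is isomorphic to $(R \otimes K[G])^H$, hence to $S$. Since the functor $\mf{Pbun}_G^T(X) \to \mf{Calg}_G(\Sigma)$ of Theorem \ref{classi} is an equivalence of categories, we conclude $E_H \times_H G \cong E_G$ as $T$-equivariant principal $G$-bundles, which is precisely the required reduction of structure group.

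The main obstacle is the forward direction: verifying that the natural algebra isomorphism $S \cong (R \otimes K[G])^H$ respects not only the $G$-action but also the $\Sigma$-filtration and every cone-wise $T$-action simultaneously. The verification reduces, cone-by-cone, to showing that the semi-equivariant trivialization of $E_H$ over $X_\sigma$ induces the correct compatible decomposition of $(R \otimes K[G])^H$ matching that of $S$; the $H$-invariants then inherit a compatible $\Sigma$-filtration by the argument template of Lemma \ref{inter}, where the pair $(H,K[G])$ plays the role that $(G,K[V])$ plays there. Once this naturality statement is established, both implications follow essentially formally from the classification theorem.
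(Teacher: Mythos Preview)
Your proposal is correct and follows essentially the same approach as the paper's proof: both directions hinge on the identification of the filtered algebra of $E_H \times_H G$ with $(R \otimes K[G])^H$, with the reverse direction then concluded via the categorical equivalence of Theorem~\ref{classi}. Your treatment is in fact more explicit than the paper's about why the $\Sigma$--filtrations match (invoking the cone-wise trivializations $\rho_\sigma^H$ and the template of Lemma~\ref{inter}), whereas the paper simply asserts the isomorphism in $\mf{Calg}_G(\Sigma)$ and, for the reverse direction, appeals to the argument of Lemma~\ref{full} rather than stating the equivalence formally.
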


\begin{proof} If $E_G$ admits a $T$--equivariant reduction of structure group to $E_H \in \mf{Pbun}^T_H(X) $, there exists 
an isomorphism 
\begin{equation}\label{iso6}
E_H \times_H G \,\cong\, E_G \,.
\end{equation} 
Let $R $ be the $\Sigma$--filtered algebra in $\mf{Calg}_H(\Sigma)$ associated to
$E_H$. Then \eqref{iso6} yields 
an isomorphism 
$$(R \otimes K[G] )^H \,\cong\, S
$$
in $\mf{Calg}_G(\Sigma)$.

On the other hand, a bundle $E_G$ with filtered algebra $(R \otimes K[G] )^H$ is isomorphic to $E_H \times_H G$ where $E_H$ is the
bundle associated to $R$. This follows from the isomorphism at the level of fibers at the closed point $x_0$ in the open $T$--orbit $O$,
together with the isomorphism of filtrations, as in the proof of Lemma \ref{full}.
\end{proof}

\section*{Acknowledgements}

We are very grateful to the two referees for their valuable comments and suggestions that led to a significant 
improvement of the manuscript. It is a pleasure to thank Paul Bressler, Pralay Chatterjee, and Nathan Ilten 
for useful discussions. The first-named author is partially supported by a J. C. Bose Fellowship. The 
second-named author is partially supported by a research grant from NBHM and SERB. The last-named author is partially 
supported by an SRP grant from METU NCC.


\begin{thebibliography}{BPV84}

\bibitem{BBN} V. Balaji, I. Biswas and D. S. Nagaraj: Principal bundles over projective
manifolds with parabolic structure over a divisor, {\it Tohoku Math. J.}
{\bf 53} (2001), 337--367.

\bibitem{BDP1} I. Biswas, A. Dey and M. Poddar: A classfication of equivariant
principal bundles on nonsingular toric varieties, {\it Internat. J. Math.} {\bf 27} (2016), no. 14, 1650115.

\bibitem{BDP2} I. Biswas, A. Dey and M. Poddar: On equivariant Serre problem for principal bundles, 
 {\it Internat. J. Math.} {\bf 29} (2018), no. 9, 1850054, 7 pp. 

\bibitem{BDP3} I. Biswas, A. Dey and M. Poddar: Equivariant principal bundle and logarithmic 
connection, {\it Pacific J. Math.} {\bf 280} (2016), no. 2, 315--325.

\bibitem{BR} P. Bardsley and R. W. Richardson: \"Etale slices for algebraic transformation groups in 
characteristic $p$, {\it Proc. London Math. Soc.} (3) 51 (1985), 295--317.

\bibitem{LNM900} P. Deligne, J. S. Milne, A. Ogus and K.-y. Shih: Hodge cycles, motives, and Shimura 
varieties, Lecture Notes in Mathematics, {\bf 900} Springer-Verlag, Berlin-New York, 1982.

\bibitem{Dan} V. I. Danilov: The geometry of toric varieties, (Russian), {\it Uspekhi Mat. Nauk} {\bf 33} (1978), no. 2(200), 85–134, 247. 

\bibitem{DP} A. Dey and M. Poddar: Equivariant Abelian principal bundles on nonsingular toric varieties, {\it 
Bull. Sci. Math.} {\bf 140} (2016), 471--487.

\bibitem{Ful} W. Fulton: {\it Introduction to Toric Varieties}, Annals of Mathematics Studies,
131, Princeton University Press, Princeton, 1993.

\bibitem{HK} P. Heinzner and F. Kutzschebauch, An equivariant version of Grauert's
Oka principle, {\it Invent. Math.} {\bf 119} (1995), 317--346.

\bibitem{IS} N. Ilten and H. S$\ddot{\text{u}}$ss: Equivariant vector bundles on
$T$--varieties, {\it Transform. Groups} {\bf 20} (2015), 1043--1073.

\bibitem{KM} K. Kaveh and C. Manon: Toric principal bundles, piecewise linear maps and buildings, 
 arXiv:1806.05613. 

\bibitem{Kly} A. A. Klyachko: Equivariant bundles over toric varieties,
 (Russian) {\it Izv. Akad. Nauk SSSR Ser. Mat.} {\bf 53} (1989), 1001--1039,
1135; translation in Math. USSR-Izv. {\bf 35} (1990), 337--375.

\bibitem{Kly2} A. A. Klyachko: Stable bundles, representation theory and Hermitian 
operators. {\it Selecta Math. (N.S.)} {\bf 4} (1998), 419--445.

\bibitem{Ko} M. Kool: Fixed point loci of moduli spaces of sheaves on toric varieties, {\it Adv. Math.} {\bf 
227} (2011), 1700--1755.

\bibitem{KLS1} F. Kutzschebauch, F. L\'arusson, and G. W. Schwarz: An Oka principle for 
equivariant isomorphisms, {\it J. reine angew. Math.} {\bf 706} (2015), 193--214.

\bibitem{KLS2} F. Kutzschebauch, F. L\'arusson, and G. W. Schwarz: Homotopy principles for 
equivariant isomorphisms, {\it Trans. Amer. Math. Soc.} {\bf 369} (2017), 7251--7300.

\bibitem{KLS3} F. Kutzschebauch, F. L\'arusson, and G. W. Schwarz: Sufficient conditions for holomorphic
linearisation, {\it Transform. Groups} {\bf 22} (2017), 475--485.

\bibitem{KLS4} F. Kutzschebauch, F. L\'arusson, and G. W. Schwarz: An equivariant parametric Oka principle for 
bundles of homogeneous spaces, {\it Math. Ann.} {\bf 370} (2018), 819--839.

\bibitem{Hum} J. E. Humphreys: {\it Linear algebraic groups}, Graduate Texts in Mathematics, No. 21. 
Springer-Verlag, New York-Heidelberg, 1975.

\bibitem{Kan1} T. Kaneyama: On equivariant vector bundles on an almost homogeneous variety, {\it 
Nagoya Math. J.} {\bf 57} (1975), 65--86.

\bibitem{Mc} S. Mac Lane: {\it Categories for the working mathematician}, Second edition, Graduate Texts in Mathematics, {\bf 5}. Springer--Verlag, New York, 1998.

\bibitem{Mum} D. Mumford, J. Fogarty and F. Kirwan: {\it Geometric invariant theory}, Third edition, 
Springer--Verlag, Berlin, 1994. xiv+292 pp.

\bibitem{Nag} M. Nagata:
{\it Lectures on the fourteenth problem of Hilbert}, Tata Institute of Fundamental Research, Bombay 1965.

\bibitem{Nor2} M. V. Nori: The fundamental group-scheme, {\it Proc. Indian Acad. Sci. Math. Sci.} {\bf 91}
(1982), 73--122.

\bibitem{Oda} T. Oda: {\it Convex Bodies and Algebraic Geometry An introduction to the theory of 
toric varieties}, Translated from the Japanese, Ergebnisse der Mathematik und ihrer Grenzgebiete 
(3), 15. Springer-Verlag, Berlin, 1988.

\bibitem{Pay} S. Payne: Moduli of toric vector bundles, {\it Compos. Math.} {\bf 144} (2008), no. 5, 1199--1213.

\bibitem{Per} M. Perling: Moduli for equivariant vector bundles of rank two on smooth toric surfaces. {\it 
Math. Nachr.} {\bf 265} (2004), 87--99.

\bibitem{Thom} R. W. Thomason: {\it Algebraic K-theory of group scheme actions}, Algebraic topology and 
algebraic K-theory (Princeton, N.J., 1983), 539–563, Ann. of Math. Stud., 113, Princeton Univ. Press, 
Princeton, NJ, 1987.

\bibitem{Wa} W. C. Waterhouse: {\it Introduction to affine group schemes}, Graduate Texts in Mathematics, {\bf 
66}. Springer--Verlag, New York--Berlin, 1979.

\bibitem{stacks} {\it The stacks project}, https://stacks.math.columbia.edu/tag/09SE.

\end{thebibliography}
\end{document}